\newtheorem{Th}{Theorem}
\newtheorem{Lemma}{Lemma}
\newtheorem{Cor}{Corollary}
\newtheorem*{Prob}{Problem}
\theoremstyle{remark}
\newtheorem{Ex}{Example}
\newtheorem{Rem}{Remark}
\newcommand{\veps}{\varepsilon}
\newcommand{\p}{\partial}
\newcommand{\extd}{\mathrm{d}}
\newcommand{\w}{\wedge}
\newcommand{\bbR}{\mathbb{R}}
\newcommand{\eup}{\mathfrak{p}}
\newcommand{\eus}{\mathfrak{s}}
\newcommand{\eusl}{\mathfrak{sl}}
\newcommand{\eugl}{\mathfrak{gl}}
\newcommand{\ev}{\operatorname{ev}}
\newcommand{\spn}{\mbox{\rm span}}
\newcommand{\weg}[1]{}
\begin{document}

\title{A solution of a problem of Sophus Lie: Normal forms of 2-dim metrics admitting two projective vector fields.}

\author{Robert L. Bryant%
\thanks{Duke University Mathematics, PO Box 90320, Durham, NC 27708}, 
Gianni Manno%
\thanks{Department of Mathematics,  via per Arnesano, 73100 Lecce Italy}, 
Vladimir S. Matveev%
\thanks{Institute of Mathematics, FSU Jena, 07737 Jena Germany,  matveev@minet.uni-jena.de}}

\date{}

\maketitle

\begin{abstract}
We give a complete list of normal forms for the $2$-dimensional metrics 
that admit a transitive Lie pseudogroup of geodesic-preserving 
transformations and we show that these normal forms are mutually 
non-isometric. This solves a problem posed by Sophus Lie.
\end{abstract}

\section{Introduction}

\subsection{Main definition and result}

Let $g$ be a Riemannian or pseudo-Riemannian metric 
on a 2-dimensional surface $D$. A diffeomorphism~$\phi:U_1\to U_2$
between two open subsets of~$D$ is said to be \emph{projective} 
with respect to~$g$ if it takes the unparametrized geodesics of~$g$ 
in~$U_1$ to the unparametrized geodesics of~$g$ in~$U_2$. 

Lie showed that what is (nowadays) called the pseudo-group~$P(g)$ 
of projective transformations of~$g$ is a Lie pseudo-group. 
A vector field $v$ on~$D$ is said to be \emph{projective} with respect to~$g$, 
if its (locally defined) flow belongs to~$P(g)$. As Lie showed,
the set of vector fields projective with respect to a given $g$ 
forms a Lie algebra. We will denote this Lie algebra by $\eup(g)$.

Clearly, any Killing vector field for $g$ is projective with respect to $g$. Metrics of constant curvature give examples of metrics admitting projective vector fields that are not Killing. It has been known 
since the time of Lagrange and Beltrami \cite{Beltrami,lagrange} 
that a metric~$g$ of constant curvature on a simply connected
domain~$D$ has $\dim\left(\eup(g)\right)=8$. (In fact, in this case,
$\eup(g)\simeq\eusl(3,\bbR)$ as Lie algebras.)

The following problem was posed by Sophus Lie%
\footnote{
German original from \cite{Lie}, Abschn. I, Nr. 4, Problem II: 
\emph{Man soll die Form des Bogenelementes einer jeden Fl\"ache bestimmen, 
deren geod\"atische Kurven mehrere infinitesimale Transformationen gestatten}.} 
in 1882:

\begin{Prob}[Lie] 
Find all metrics $g$ such that $\dim\eup(g)\ge 2$.
\end{Prob}

The following theorem, which is the main result of our paper, 
essentially solves this problem.

\begin{Th} \label{main}
Suppose that a metric $g$ on~$D^2$ possesses two projective vector fields 
that are linearly independent at some~$p\in D^2$. Then 
either~$g$ has constant curvature on some neighborhood of~$p$ 
or else, on some neighborhood of~$p$, there exist coordinates~$x,y$ 
in which the metric~$g$ takes one of the following forms:

\begin{enumerate} 
\item Metrics with $\dim\eup(g)=2$.

\begin{enumerate}

\item $\veps_1 e^{(b+2)\,x}\,dx^2 
    +\veps_2 e^{b\,x}\,dy^2 $, \label{1a} 
where $b\in \mathbb{R}\setminus \{-2,0,1\}$ 
and $\veps_i\in \{-1,1\}$ are constants,

\item \label{1b} 
 $a\left( \frac{ e^{(b+2) \, x} dx^2}
                           {(e^{b\,x } +\veps_2)^2} 
     + \veps_1\frac{ e^{b\,x}dy^2}{e^{b\,x }+\veps_2}\right)$, 
where $a\in\mathbb{R}\setminus\{0\}$, $b\in \mathbb{R}\setminus \{-2,0,1\}$,  
and $\veps_i\in \{-1,1\}$ are constants, and

\item \label{1c}
$a\left(\frac{e^{2\,x}dx^2}{x^2}+\veps \frac{dy^2}{x}\right)$, 
where $a\in\mathbb{R}\setminus\{0\}$, and $\veps\in \{1, -1\}$ are constants.

\end{enumerate}

\item Metrics with $\dim\eup(g)=3$.

\begin{enumerate}

\item \label{corol2}
$\veps_1 e^{3x}dx^2+\veps_2 e^{x} dy^2$, 
where $\veps_i\in \{-1,1\}$ are constants, 

\item \label{corol1}
$a\left(\frac{e^{3x}dx^2}{(e^{x}+\veps_{2})^{2}} +
\veps_1\frac{e^{x}dy^2}{(e^{x}+\veps_{2})}\right)$,
where $a\in \mathbb{R}\setminus\{0\}$, $\veps_i\in \{-1,1\}$ are constants, and

\item \label{case2}
$a\left(\frac{dx^2}{(cx+2x^2+\veps_2)^2 x} 
          + \veps_1\frac{x dy^2}{ (cx+2x^2+\veps_2)}\right)$,
where $a>0$, $\veps_i\in \{-1,1\}$, $c\in \bbR$ are constants. 

\end{enumerate}

\end{enumerate}

No two distinct metrics from this list are isometric.

\end{Th}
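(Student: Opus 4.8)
The plan is to prove the theorem in two largely independent parts: first the \emph{classification}, that any $g$ carrying two pointwise-independent projective fields is either of constant curvature near $p$ or locally isometric to one of the listed forms; and second the \emph{rigidity}, that the listed metrics are pairwise non-isometric. For the classification I would start from the observation that a pair of projective fields $v_1,v_2$ independent at $p$ spans a subalgebra of $\eup(g)$ isomorphic to one of the two real $2$-dimensional Lie algebras: the abelian one, or the non-abelian one with $[v_1,v_2]=v_1$. Invoking Lie's local classification of $2$-dimensional Lie algebras of vector fields in the plane, I can place $(v_1,v_2)$ into one of a short list of normal forms on a neighborhood of a generic $p$ (e.g.\ $v_1=\p_y,\ v_2=\p_x$ in the abelian case, and $v_1=\p_y,\ v_2=x\p_x+y\p_y$ type models in the non-abelian case), after a point transformation.

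Next I would translate ``$v$ is projective'' into coordinates. Writing the unparametrized geodesics as the Liouville ODE $y''=\Lambda_0+\Lambda_1 y'+\Lambda_2 y'^2+\Lambda_3 y'^3$, whose coefficients $\Lambda_i(x,y)$ are explicit combinations of the Christoffel symbols of $g$, the condition that $v$ be projective is exactly that its prolongation annihilate this ODE, i.e.\ a first-order linear system relating $v$ and the $\Lambda_i$. Imposing this for the normalized $v_1,v_2$ forces the $\Lambda_i$, hence the components $E,F,G$ of $g$, to be equivariant under the two flows; this collapses the PDE system to ODEs in the single remaining variable. Solving those ODEs and recording the integration constants produces the candidate metrics; the cases $\dim\eup(g)=2$ and $\dim\eup(g)=3$ are separated by whether the solution admits an extra projective field, and the constant-curvature branch appears exactly as the degenerate parameter values, which I would excise. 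A final change of the remaining coordinate brings each solution to the tidy shape in the statement, yielding forms \ref{1a}--\ref{case2} and simultaneously pinning down the admissible ranges of $b,c,a,\veps_i$, with the exclusions $b\notin\{-2,0,1\}$ being precisely where the family degenerates to constant curvature or merges with another case.

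For rigidity I would use local isometry invariants built from the Gaussian curvature $K$. In every listed normal form the coefficients depend on $x$ alone, so $K=K(x)$, and likewise $I_2\eqdef|\grad K|_g^2=g^{xx}\,K'(x)^2$ and a third invariant, say $I_3\eqdef\langle\grad K,\grad I_2\rangle_g$, are functions of $x$. Since any isometry must carry the triple $(K,I_2,I_3)$ to itself, the image curve $x\mapsto(K,I_2,I_3)\in\bbR^3$, equivalently the pair of functional relations $I_2=f(K),\ I_3=h(K)$ obtained by eliminating $x$ wherever $K'\neq0$, is an isometry invariant; for proving \emph{non}-isometry only this necessity is needed, so no completeness result for the invariants is required. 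The proof then reduces to an explicit verification: compute $(K,I_2,I_3)$ for each family, check $dK\neq0$ (so the metrics are genuinely non-constant-curvature and the invariants are usable), and show that the parameters $a,b,c$ are recoverable from $f,h$. Matching across the six families and within each family then gives pairwise non-isometry.

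The main obstacle I anticipate is completeness and bookkeeping in the classification: ensuring that Lie's normal-form list genuinely exhausts all local models of the $2$-dimensional action near a generic $p$, and carefully tracking the degenerate parameter values where a nominally distinct family silently becomes constant-curvature or coincides with another. On the rigidity side the delicate point is the discrete parameters $\veps_i$, which can alter the signature and need not be detected by the curvature magnitude alone; separating these, and confirming that the curve $x\mapsto(K,I_2,I_3)$ (or a further invariant in sub-cases where this one degenerates to a point or a shared curve) distinguishes every admissible parameter choice, is the careful final step.
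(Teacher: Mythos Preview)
Your overall architecture is reasonable, but there is a genuine gap at the heart of the classification step. You write that equivariance of the $\Lambda_i$ under the two flows ``hence'' forces the metric components $E,F,G$ to be equivariant, so that the problem collapses to ODEs in one variable. This inference is false: a \emph{projective} vector field preserves unparametrized geodesics, not the metric, so the projective connection $(\Lambda_0,\Lambda_1,\Lambda_2,\Lambda_3)$ can be independent of $y$ while $E,F,G$ genuinely depend on $y$. In fact, the statement that one of the two projective fields can always be taken to be Killing (which is what ``$E,F,G$ depend only on $x$'' amounts to) is exactly Theorem~\ref{killing}, and the paper explicitly remarks that no direct proof of it is known; it is obtained only \emph{after} the list is constructed. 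So your plan, as written, is circular at this point.

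The paper bridges this gap by two ingredients you are missing. First, Liouville's substitution $a=g/\det(g)^{2/3}$ turns the condition ``$g$ has the given projective connection'' into a \emph{linear} first-order PDE system for $a_{11},a_{12},a_{22}$ whose coefficients are built from the $\Lambda_i$; in the normalized coordinates these coefficients are functions of $x$ alone, so $\partial_y a$, $\partial_y^2 a$, $\partial_y^3 a$ are again solutions. Second, a separate argument via quadratic integrals (the superintegrable/Koenigs theory) shows that when $\dim\eup(g)=2$ the solution space of this linear system is at most $3$-dimensional. Together these force each $a_{ij}$ to satisfy a constant-coefficient linear ODE in $y$, and \emph{that} is what reduces the problem to explicit ODEs in $x$. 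Without both pieces you have no mechanism to control the $y$-dependence of the metric. A secondary point: the local symmetry algebra of a projective connection is never the abelian $2$-dimensional algebra (Lie's list is $\{0\},\bbR,\eus,\eusl(2,\bbR),\eusl(3,\bbR)$), so your abelian branch is superfluous; one can always extract a non-abelian pair $[X,Y]=X$.

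Your rigidity strategy via scalar curvature invariants is close to what the paper does (it uses $R$, $|\nabla R|^2$, and $\Delta_g R$), but curvature alone does not separate all cases: the paper additionally exploits the conjugacy class of the Killing field inside $\eup(g)\simeq\eusl(2,\bbR)$ (nilpotent vs.\ split vs.\ elliptic) to distinguish \eqref{corol2}/\eqref{corol1} from \eqref{case2} and to pin down some of the $\veps_i$. You should plan for an algebraic invariant of this kind in addition to the curvature curve.
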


Strictly speaking, what Theorem~\ref{main} 
gives is a list of local normal forms for metrics~$g$ 
whose projective pseudo-group~$P(g)$ is locally transitive.
Naturally, for such metrics, one has~$\dim\eup(g)\ge2$.
In Sections~\ref{phenomena} and~\ref{phenomena1} it will be shown that, 
for a metric~$g$ of nonconstant curvature on a surface~$D$, 
any two-dimensional subalgebra~$\eus\subset\eup(g)$ 
acts locally transitively on a dense open subset of~$D$.  
(One can further show that this open set has full measure.)
Thus, if~$\dim\eup(g)\ge2$, then either~$g$ 
has constant curvature or else, outside a closed set with no interior,%
\footnote{Examples show that this closed set can be nonempty.}
$g$ is locally isometric to one of the metrics given in Theorem~\ref{main}.

\begin{Rem}
The higher dimensional analog of Lie's question is easier, 
possibly because the systems of PDE that appear have a
higher degree of overdeterminacy. It was treated with success by
Solodovnikov~\cite{solodovnikov1,solodovnikov2, solodovnikov25}.
The global (i.e., when the manifold and the projective vector fields  are  complete) version
of Lie's question was explicitly asked by Schouten
\cite{schouten}; for the Riemannian case, 
it was solved in \cite{Obata,lich,CMH,archive}.
\end{Rem}

\subsection{Killing vector fields for metrics 
with $\dim\eup(g)\ge 2$ and results of A.V. Aminova}

It appears to be commonly believed that A.V. Aminova solved
Lie's problem in \cite{Aminova0}, \cite{Aminova2}. Indeed, she claimed

\begin{Th} \label{killing} 
If a metric~$g$ satisfies~$\dim\eup(g)\ge2$, 
then it admits a Killing vector field.
\end{Th}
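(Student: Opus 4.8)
The plan is to deduce Theorem~\ref{killing} directly from the classification in Theorem~\ref{main}. Since the statement is local, I would only need to produce a Killing field for $g$ near one conveniently chosen point. If $g$ has constant curvature near $p$ there is nothing to do: as recalled in the introduction, $\eup(g)\simeq\eusl(3,\bbR)$ locally, so $g$ already carries a $3$-dimensional space of Killing fields. So I would assume $g$ has nonconstant curvature and, after shrinking $D$, that this holds throughout. Using $\dim\eup(g)\ge2$, I would fix a $2$-dimensional subalgebra $\eus\subset\eup(g)$; by the transitivity statement to be proved in Sections~\ref{phenomena} and~\ref{phenomena1}, $\eus$ acts locally transitively on a dense open set $D'\subset D$, so at any $p'\in D'$ two elements of $\eup(g)$ are linearly independent.

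At such a $p'$ the hypotheses of Theorem~\ref{main} are met, and since $g$ is not of constant curvature there, Theorem~\ref{main} will put $g$, in suitable coordinates $(x,y)$ near $p'$, into one of the explicit normal forms on the list. The punchline is then an inspection: each of those normal forms has the shape $A(x)\,dx^2+B(x)\,dy^2$, with no $dx\,dy$ term and with coefficients independent of $y$, so $\mathcal{L}_{\partial/\partial y}\,g=0$ and $\partial/\partial y$ is the desired Killing field.

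So the only substantive ingredient is the transitivity of $2$-dimensional subalgebras of $\eup(g)$ on a dense open set (for metrics of nonconstant curvature); once that is available, the theorem costs nothing. Should one want to bypass it, an alternative is to work with the linear representation $\rho$ of $\eup(g)$ on the finite-dimensional solution space $V$ of the linear system parametrising the metrics projectively equivalent to $g$ (the Lie derivative by a projective vector field maps $V$ to itself): any $v\in\ker\rho$ is automatically Killing for $g$, and on the homothety subalgebra $\mathfrak{h}=\{v\in\eup(g):\mathcal{L}_v g\in\bbR\,g\}$ the conformal factor is a Lie-algebra character whose kernel is exactly the Killing algebra, so a nonzero Killing field exists as soon as $\ker\rho\ne0$ or $\dim\mathfrak{h}\ge2$ (in particular, whenever every projective vector field is homothetic). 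The hard part, in either route, is exactly the classification: ruling out a faithful $\rho$ for which $g$ has no homothety in $\eup(g)$ is essentially Theorem~\ref{main} over again. Hence Theorem~\ref{killing}, though correct, is only a first step and does not by itself resolve Lie's problem.
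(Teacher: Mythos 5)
Your proposal is essentially the paper's own argument: the paper likewise derives Theorem~\ref{killing} from Theorem~\ref{main} (and explicitly remarks that it found no way to prove the Killing statement without first constructing the list), using the local transitivity of a two-dimensional subalgebra of $\eup(g)$ on a dense open set together with the observation that every normal form on the list admits $\partial/\partial y$ as a Killing field. The one point you dismiss too quickly is globalization: the theorem is naturally read on the whole connected surface, and your construction only produces a Killing field near one point of the dense open set; the paper closes this gap (in the proof of Corollary~\ref{corlst}) by noting that the quadratic integral $I_Z$ attached to a projective vector field $Z$ is constant along geodesics, so if $Z$ is Killing on an open subset it is Killing on the entire connected manifold.
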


\noindent She then used this result to classify the metrics~$g$
with $\dim\left(\eup(g)\right)\ge 2$.

Unfortunately, Aminova's proof of Theorem~\ref{killing}
has a serious gap. Namely, she assumed 
(see \cite[pp.~3,6]{Aminova0} or \cite[pp.~414,424]{Aminova2}) 
that Koenigs~\cite{konigs} had proved that if the geodesic flow
of a metric admits \emph{three} linearly independent quadratic integrals%
\footnote{For a discussion of the notion of `quadratic integrals' 
of a geodesic flow and their independence, 
see Section~\ref{superintegrable}.}%
, then the metric admits a nontrivial Killing vector field.

However, a careful reading of Koenigs' note~\cite{konigs} 
shows that he had instead proved that
if the geodesic flow of a metric admits \emph{four} linearly
independent quadratic integrals, then the metric
admits a nontrivial Killing vector field.

Indeed, Koenigs' examples already show that the existence 
of three linearly independent quadratic integrals 
does not imply the existence of a nontrivial Killing vector field.

\begin{Ex}[{\upshape\cite{konigs}}] Consider the metric $g$ given by
$(4x^2{+}y^2{+}1)(dx^2+dy^2)$. The geodesic flow of this metric has
three linearly independent quadratic integrals. They are
\begin{equation*}
\begin{aligned}
F_0&=(4x^2{+}y^2{+}1)(dx^2+dy^2),\\ 
F_1&=(4x^2{+}y^2{+}1)\bigl(y^2\,dx^2-(4x^2{+}1)\,dy^2\bigr),\\
F_2&=(4x^2{+}y^2{+}1)\bigl(xy^2(dx^2+dy^2)
       +(4x^2{+}y^2{+}1)dy\,(x\,dy-y\,dx)\bigr).
\end{aligned}  
\end{equation*}
However this metric admits no nontrivial Killing vector field.
Indeed, the scalar curvature $R$ of this metric 
and its $g$-Laplacian $\Delta_g R$ are independent functions,
which is impossible for metrics admitting nontrivial Killing vector fields. 
\end{Ex}

On the other hand, 
all the normal forms listed in Theorem~\ref{main} admit $\frac{\p }{\p y}$ 
as a Killing vector field. Thus, Theorem~\ref{main} implies 
Theorem~\ref{killing} (in  the proof of Corollary~\ref{corlst} we will show, that if a projective vector field is a Killing vector field 
 on an open subset, then it is  a Killing vector field on the whole connected manifold).  Consequently, 
Aminova's description of the metrics~$g$ with $\dim\eup(g)\ge 2$ 
is correct and can be obtained from ours by coordinate changes. 
However, we prefer our description because it is simpler.
For example, all the metrics in our list are given by elementary functions, 
while some of Aminova's metrics include functions implicitly given 
as a solution of a certain differential equation.

Finally, we would like to remark that we have not found a way to
prove Theorem~\ref{killing} directly, i.e., without constructing
the list of the metrics first.

\subsubsection*{Acknowledgements}
The first author acknowledges financial support from the
National Science Foundation via DMS-0604195 and from Duke University. 
The second author would like  
to express his gratitude to Katholieke
Universiteit Leuven for hospitality,
 and to R. Vitolo for useful discussions.
The third author would like to thank  V.~Bangert, A.~Bolsinov, A.~Cap,
B. Doubrov,  M.~Eastwood,  O.~Kowalski,  B.~Kruglikov,    and V.~Shevchishin for useful discussions, 
Deutsche Forschungsgemeinschaft 
(Priority Program 1154 --- Global Differential Geometry), 
Ministerium f\"ur Wissenschaft, Forschung und Kunst
Baden-W\"urttemberg  (Elite\-f\"orderprogramm Postdocs 2003),  
KU Leuven,  and FSU Jena for partial financial support, 
and University of Lecce and Duke University for hospitality. 

\section{Proof}

\subsection{Outline of the proof}

In Section~\ref{0} we recall some classical results of Lie, Liouville,
Tresse, Cartan, and Koenigs. In Section~\ref{connection} we use these
results to describe all projective connections admitting two
projective vector fields. In Sections~\ref{miracle} and
\ref{metric} we determine which of these connections come from
(pseudo)Riemannian metrics. In order to do this, given a projective
connection, we first find a system of PDE such that the
existence of nontrivial solutions of this system implies the
existence of metrics with this projective connection 
(see Lemma~\ref{1} in Section~\ref{miracle}). This equation
(\eqref{lin1} in Section~\ref{miracle}) is seen to be linear%
\footnote{This linearity observation is a coordinate-free
version of an observation that Darboux attributed to Dini. 
See Darboux~\cite[\S\S600--608]{Darboux}.  
However, the general result was already known 
to R. Liouville in 1887~\cite{liouville}.}
in the components of the matrix $g/\det(g)^{2/3}$.

In Section~\ref{metric} we use the linearity of the equations 
and the good normal form for the projective connection from
Section~\ref{connection} to reduce this system of partial
differential equations to a system of ordinary differential
equations under the additional assumption that the metric is not
superintegrable%
\footnote{We recall the definition of
superintegrable metrics in Section~\ref{superintegrable}.
We also prove that superintegrable metrics are those admitting precisely
three linearly independent projective vector fields.}.
Then we solve this system of ODE and obtain the list
of the metrics. In Section~\ref{2.5} we describe all metrics~$g$
with nonconstant curvature and~$\dim\eup(g)\ge3$ 
(which turn out to be preciesly the superintegrable ones, see
 Lemma~\ref{superintegrable1}  and Corollary~\ref{corlst}). 
Finally, in Section~\ref{different} we explain why the metrics 
from Theorem~\ref{main} are mutually nonisometric.

\subsection{ Classical results of Beltrami, Lie, Liouville,  Koenigs,  Cartan,  and Tresse} 
\label{0}

\subsubsection{Projective connections} \label{01}
A second order ordinary differential equation of the form
\begin{equation}\label{eq1}
y'' = K^0(x,y) + K^1(x,y)\,y' + K^2(x,y)\,(y')^2 + K^3(x,y)\,(y')^3,
\end{equation}
where the functions~$K^i$ are defined on some connected domain~$D$ 
in the $xy$-plane, is classically referred 
to as a \emph{projective connection}.%
\footnote{For the relation of this classical notion 
with the modern formulation of projective connections due to Cartan, 
see~\cite{cartan}.}

For any symmetric affine connection on the domain $D^2$ 
with coordinates $x,y$, say,
$\Gamma(x,y)=\bigl(\Gamma^i_{jk}(x,y)\bigr)
=\bigl(\Gamma^i_{kj}(x,y)\bigr)$, 
the projective connection \emph{associated to}~$\Gamma$ 
is defined to be
\begin{equation} \label{con} 
y'' =-\Gamma^2_{11}+(\Gamma^1_{11}{-}2\Gamma^2_{12})\,y'
       -(\Gamma^2_{22}{-}2\Gamma^1_{12})(y')^2
         +\Gamma^1_{22}(y')^3.
\end{equation} 
It has been known since the time of Beltrami~\cite{Beltrami}, 
that the solutions of this ODE and the geodesics 
of the connection~$\Gamma$ are closely related: 
Namely for every solution $y(x)$ of~\eqref{con}
the curve $\bigl(x,y(x)\bigr)$ is, up to reparametrization, 
a geodesic of~$\Gamma$.

It is well-known (and easy to verify directly) that, 
under any coordinate change of the form 
$(x,y)=\bigl(F(\bar x, \bar y),G(\bar x,\bar y)\bigr)$,
the projective connection~\eqref{eq1} 
is transformed into another projective connection, 
now expressed in terms of~$(\bar x, \bar y)$. 

A vector field%
\footnote{When the $xy$-coordinates are clear from context, 
we will also use the more compact notation $Z = (Z^1, Z^2)$ 
for this vector field.}
$$
Z = Z^1(x,y)\frac{\p}{\p x}+Z^2(x,y)\frac{\p}{\p y}
$$ 
on~$D$ is an \emph{infinitesimal symmetry} 
of the projective connection~\eqref{eq1} 
if its (local) flow preserves~\eqref{eq1}.
It is known \cite{Lie, Tresse}, 
that a vector field~$Z$ as above is an infinitesimal symmetry
of~\eqref{eq1} if and only if it satisfies the PDE system
\begin{equation} \label{ptr}
\left.
\renewcommand{\arraystretch}{1.5}
\begin{array}{r}
Z^2_{xx}-2K^0Z^1_x-K^1Z^2_x+K^0Z^2_y-K^0_xZ^1-K^0_yZ^2=0\\
-Z^1_{xx}+ 2Z^2_{xy}-K^1Z^1_x-3K^0Z^1_y-2K^2Z^2_x-K^1_xZ^1-K^1_yZ^2=0\\
-2Z^1_{xy}+Z^2_{yy}-2K^1Z^1_y-3K^3Z^2_x-K^2Z^2_y-K^2_xZ^1-K^2_yZ^2=0\\
-Z^1_{yy}+K^3Z^1_x-K^2Z^1_y-2K^3Z^2_y-K^3_xZ^1-K^3_yZ^2=0
\end{array}
\right\}.
\end{equation}

The set of vector fields in~$D$
whose flows preserve the projective connection~\eqref{eq1}
form a Lie algebra, which will be denoted~$\eup(D,\eqref{eq1})$,
or, more simply,~$\eup(\eqref{eq1})$ 
when the domain~$D$ is clear from context.
We will also use the notation~$\eup(\Gamma)$ for the equation~\eqref{con},
and for a metric~$g$, we will use the notation~$\eup(g)$ to denote
infinitesimal symmetries of the the projective connection associated 
to the Levi-Civita connection of~$g$.

For use below, we want to point out the following consequence of~\eqref{ptr}:
Differentiating the equations~\eqref{ptr} with respect to~$x$ and~$y$, 
one obtains a system of $8$ third order equations for the~$Z^i$ 
that can be solved for all of the third order derivatives of the~$Z^i$ 
as linear expressions in their lower order derivatives.

In particular, consider the~$\bbR^8$-valued function
$$
\hat Z = (Z^1,Z^2,Z^1_x,Z^2_x,Z^1_y,Z^2_y,Z^1_{xy},Z^2_{xy}).
$$
The system~\eqref{ptr} and its derivatives can be written 
in the vectorial form
$$
\extd\hat Z = \hat Z\,( X\,\extd x + Y\,\extd y).
$$
where~$X=X(x,y)$ and~$Y=Y(x,y)$ are certain $8$-by-$8$ matrices 
whose entries are constructed from the functions~$K^i$ 
and their first two derivatives.  
One can thus regard the $\eugl(8,\bbR)$-valued 
$1$-form $\psi = (X\,\extd x+Y\,\extd y)$ 
as defining a linear connection on the bundle~$D\times\bbR^8$,
one whose parallel sections~$\hat Z$ on an open set~$U\subset D$
correspond to the elements of~$\eup(U,\eqref{eq1})$.
This interpretation of~\eqref{ptr} has some useful consequences.%
\footnote{For those interested in an invariant formulation, 
we offer the following description:  The linear differential
equations~\eqref{ptr} define in~$J^2(D,TD)$, the bundle of $2$-jets
of vector fields on~$D$, a vector subbundle~$P\subset J^2(D,TD)$
of rank~$8$.  The contact plane field on~$J^2(D,TD)$ (i.e., the plane field
of codimension~$6$ to which all of the holonomic sections of~$J^2(D,TD)$
are tangent) produces a horizontal $2$-plane field~$H\subset TP$
that defines a linear connection on the bundle~$P\to D$.  
The $2$-jets of projective vector fields on~$D$ 
are then sections of~$P$ that are parallel with respect to this connection.}

First, if~$U\subset D$ is connected, then,
for any point~$p\in U$, the evaluation mapping
$$
\ev_p(Z)=\bigl(Z^1(p),Z^2(p),Z^1_x(p),Z^2_x(p),Z^1_y(p),Z^2_y(p),
Z^1_{xy}(p),Z^2_{xy}(p)\bigr)
$$
defines a linear injection~$\ev_p:\eup(U,\eqref{eq1})\to\bbR^8$. 
In particular, if any~$Z\in\eup(\eqref{eq1})$
vanishes to order~$3$ at any point of~$D$ (assumed connected), 
then~$Z$ vanishes identically.

Second, for any~$p\in D$, one can define its \emph{local} infinitesimal
symmetry algebra~$\eup(p,\eqref{eq1})$ to be the inverse limit of the
symmetry algebras~$\eup(U,\eqref{eq1})$ as~$U$ ranges over the open
neighborhoods of~$p$ in~$D$.  The above formulation then implies that
$\eup(p,\eqref{eq1})=\eup(U,\eqref{eq1})$ for some connected 
open neighborhood~$U$ of~$p$ in~$D$. 

Third, differentiating~$\extd\hat Z = \hat Z\,\psi$ yields
$0 = \extd(\extd\hat Z) = \hat Z (\extd\psi + \psi\w\psi)$, 
so that one gets the relation~$\hat Z L = 0$, where
$$
\extd\psi + \psi\w\psi = (Y_x - X_y + [X,Y])\,\extd x\w\extd y 
   = L(x,y)\,\extd x\w\extd y
$$
is the curvature form of the connection~$\psi$.  Thus, if~$L(p)$
is nonzero, one gets that $\dim\eup(p,\eqref{eq1})<8$.  
Also, when~$L$ is nonzero, one gets more relations on~$\hat Z$ 
by differentiating~$0=\hat Z L$, 
which yields~$0 = \hat Z (\extd L + \psi\,L)$.  On the other hand,
when~$L$ vanishes identically on a simply-connected open 
$p$-neighborhood~$U$, the flatness of the connection~$\psi$ 
implies that~$\dim\eup(U,\eqref{eq1})=8$.

\subsubsection{Lie algebras of projective vector fields}
\label{02} 
Lie~\cite{Lie} classified the possible local infinitesimal symmetry algebras 
of projective vector fields of projective connections. 
He proved\footnote{Modern version of  Lie's proof can be found, for example, in  \cite{ibragimov,romanovski}.} that such a local Lie algebra 
is isomorphic to one of the following
$$
1.\ \{0\},\qquad
2.\ \bbR,\qquad
3.\ \eus,\qquad
4.\ \eusl(2,\bbR),\qquad
5.\ \eusl(3,\bbR).
$$
where~$\eus$ is the non-commutative Lie algebra of dimension~$2$ 
(i.e., the algebra spanned by two elements~$X$ and~$Y$ that satisfy~$[X,Y]=X$). 
Since we will be considering the cases when a metric has at least
two linearly independent projective vector fields, 
the first two algebras will not arise.

Both of the algebras $\eusl(2,\bbR)$ and $\eusl(3,\bbR)$ 
contain~$\eus$ as a subalgebra. Thus, by Lie's classification,
\emph{if a connection $\Gamma$ satisfies~$\dim\eup(\Gamma)\ge2$, 
it admits two projective vector fields~$X\not\equiv0$ and $Y$ 
satisfying the relation $[X,Y]=X$.}

Lie~\cite{lie2} also investigated the possible realizations of~$\eus$
as an algebra of vector fields on $\bbR^2$. He showed that, 
for two vector fields $X,Y$ on $\bbR^2$ satisfying the relation $[X,Y]=X$, 
almost every point $p\in\bbR^2$ has a neighborhood 
on which there are coordinates $x,y$ in which
\begin{itemize}
\item[] {\bf transitive case } 
 $X=\frac{\p }{\p y} $,
 $Y=\frac{\p }{\p x} + y \frac{\p }{\p y} $,
 \label{nt0}
\item[] {\bf nontransitive case } 
 $X=e^y \frac{\p }{\p y}$, 
 $Y=- \frac{\p }{\p y}$, or \label{nt}
\item[] {\bf trivial case } $X\equiv 0$.
\end{itemize}

\label{phenomena}

Note that there exist examples of smooth vector fields $X\not\equiv 0$ and~$Y$ on the plane 
that satisfy the relation $[X,Y]=X$ and are
such that they are as in the {\bf transitive} case in some open set 
and as in the {\bf trivial} case in another open set. 

\begin{Ex} Let the function $f:\bbR\to\bbR$ be  given by 
$$
f(y)=\begin{cases}e^{-\frac1{2y^2}} \ & \text{for $y>0$}, \\
         0 & \text{for $y\le0$.} 
   \end{cases} 
$$
Then the vector fields $X ={f(y)}\frac{\p }{\p x}$
and $Y =-y^3\frac{\p }{\p y}$,
satisfy $[X,Y]=X$ and are as in the {\bf transitive}
case for $y>0$ but are as in the {\bf trivial } case for $y<0$.

Moreover, the vector fields~$X= \frac{\p }{\p x}$
and~$Y = x\frac{\p }{\p x} + f(y)\frac{\p }{\p y}$
also satisfy~$[X,Y]=X$ and fall into the transitive case for~$y>0$
but into the nontransitive case for~$y<0$.
\end{Ex}

 From the results of Section~\ref{01} it follows, that 
this phenomena cannot happen,  if the vector fields are infinitesimal symmetries of a projective connection (on a connected $D$). Indeed,   if  $X \in \mathfrak{p} (\eqref{eq1})$   vanishes at every point of  
 an open  subset $U$, then for every  $p\in U$  
  we evidently have  $\ev_p(X)=0$ implying     $X\equiv 0$. 
  
 Thus,  \emph{ if  two infinitesimal  symmetries  $X,Y$  of a projective connection on a connected  surface  satisfy  $[X,Y]=X$ and $X\not\equiv  0$, 
 then,   in a neighborhood of almost every point,  they  
 are as in the {\em 
  transitive case}, or as in the {\em  nontransitive case}.    }

\subsubsection{ Invariants that decide whether a projective connection corresponds to the metric of constant curvature.} \label{phenomena1}

We say that a projective connection on $D^2$ is \emph{flat},
if every point of $D^2$ has local coordinates $x,y$ such that the
projective connection has the form $y''=0$ (i.e., if the geodesics
of the projective connection can be mapped into straight lines).
Lie~\cite{Lie} showed that the projective connection of a metric 
is flat if and only if the metric has constant curvature, 
and that the (local) Lie algebra $\eup(g)$ of a metric $g$ 
of constant curvature is $\eusl(3,\bbR)$. (Of course, this follows
immediately from Beltrami's earlier result that the geodesics
of a metric can be mapped to straight lines in the plane if and
only if the metric has constant curvature.)

There is a simple test for when the projective connection~\eqref{eq1} is flat. 
Consider the following two functions 
(sometimes improperly called \emph{Cartan invariants}
although they were already known to Liouville~\cite{liouville} in 1889):
\begin{equation}\label{eq: Liouvilleinvariants}
\begin{aligned}
L_1&= 2K^1_{xy}-K^2_{xx}-3K^0_{yy}- 6K^0K^3_x- 3K^3K^0_x\\
&\qquad + 3K^0K^2_y + 3K^2K^0_y + K^1K^2_x - 2K^1K^1_y\\
L_2&= 2K^2_{xy} - K^1_{yy} - 3K^3_{xx} + 6K^3K^0_y + 3K^0K^3_y\\
&\qquad - 3K^3K^1_x - 3K^1K^3_x - K^2K^1_y + 2K^2K^2_x
\end{aligned}
\end{equation}
Liouville~\cite{liouville} proved that the expression
\begin{equation*}
\lambda = (L_1\,\extd x + L_2\,\extd y)\otimes (\extd x \w \extd y)
\end{equation*}
is a differential invariant%
\footnote{In fact, it is the lowest order tensorial invariant.} (w.r.t.  coordinate changes) 
of the projective connection~\eqref{eq1}.  
Moreover, Liouville~\cite{liouville}, Tresse~\cite{Tresse}, 
and Cartan~\cite{cartan} each gave independent proofs that a projective
connection is flat on an open set~$U$ if and only~$\lambda$ vanishes there.
\label{03}

\begin{Rem} 
 A calculation shows that
the equation~$\hat Z L = 0$ from Section~\ref{01}  
is just the condition that the Lie derivative of
$\lambda$ with respect to~$Z$ be zero, see for example \cite{romanovski}. \end{Rem}

A direct corollary of this classical result is the following:

\begin{Lemma} \label{lem3} 
Consider the projective connection \eqref{eq1} on a connected domain~$D$
and assume that its Liouville invariant~$\lambda$ is nonvanishing on~$D$.
Suppose that vector fields $X$ and~$Y$ in~$\eup(\eqref{eq1})$ 
satisfy $[X,Y]=X$ and that $X$ is not identically~$0$.
Then, at every point of  a dense open subset of $D$  
the vector fields $X$ and $Y$ are linearly independent 
{\upshape(}and hence fall into the \emph{transitive} case{\upshape)}.
\end{Lemma}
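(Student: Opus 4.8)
The plan is to argue by contradiction. The locus where $X$ and $Y$ are linearly dependent is the closed set $\{X\w Y=0\}$, so if the conclusion failed there would be a nonempty open set $U\subset D$ on which $X$ and $Y$ are everywhere linearly dependent, and I will show that this forces $\lambda\equiv0$ on an open subset of $U$, contradicting the hypothesis. First I would note that $X$ cannot vanish on any open subset of $D$: if it did, then $\ev_p(X)=0$ for an interior point $p$, whence $X\equiv0$ on the connected domain $D$ by the injectivity of $\ev_p$ recalled in Section~\ref{01}, contrary to $X\not\equiv0$. Hence $U':=U\setminus\{X=0\}$ is still nonempty and open, and $X$ is nowhere zero on it.

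Next, pick $p\in U'$. By the flow-box theorem there are coordinates $(x,y)$ near $p$ with $X=\p/\p y$; this is a legitimate normalization because any coordinate change carries a projective connection of the form~\eqref{eq1} to another of the same form and preserves the tensor $\lambda$. Since $X$ is nowhere zero there, linear dependence of $X$ and $Y$ on $U$ forces the first component $Y^1$ of $Y$ to vanish identically near $p$, and then $[X,Y]=X$ reads $Y^2_y=1$, so $Y^2=y+c(x)$ for some function $c$; after the further admissible coordinate change $y\mapsto y+c(x)$ we may assume $X=\p/\p y$, $Y=y\,\p/\p y$. Now I would impose that both are infinitesimal symmetries, i.e.\ solve~\eqref{ptr}. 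The condition coming from $X$ says exactly that every $K^i$ is independent of $y$; substituting $Z=(0,y)$ for $Y$ into~\eqref{ptr} then yields, from its first, third and fourth equations (the second being an identity), $K^0=K^2=K^3=0$, while $K^1=K^1(x)$ is left unconstrained. Thus in this gauge the connection is forced to be $y''=K^1(x)\,y'$, which is Lie's nontransitive normal form of Section~\ref{02}.

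Finally I would substitute $K^0=K^2=K^3=0$ and $K^1=K^1(x)$ into the formulas~\eqref{eq: Liouvilleinvariants}. Since $K^0,K^2,K^3$ and all their derivatives vanish and $K^1$ depends on $x$ alone (so $K^1_y=K^1_{xy}=K^1_{yy}=0$), every monomial of $L_1$ and of $L_2$ vanishes, hence $\lambda\equiv0$ near $p$, contradicting the assumption that $\lambda$ is nonvanishing on $D$. Therefore $\{X\w Y=0\}$ has empty interior, so $X$ and $Y$ are linearly independent on a dense open subset of $D$, and by the dichotomy recorded at the end of Section~\ref{02} they fall into the transitive case there. The only steps needing care are verifying that the two coordinate normalizations are of the admissible type and keeping track of signs in~\eqref{ptr} and~\eqref{eq: Liouvilleinvariants}; I expect this (quite mild) bookkeeping to be the main obstacle — notably, the argument never actually uses Lie's realization theorem to reach the contradiction, only to phrase the final conclusion as the transitive case.
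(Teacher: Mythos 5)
Your proof is correct, and it takes a mildly but genuinely different route from the paper's. The paper first invokes Lie's realization theorem for the two-dimensional nonabelian algebra~$\eus$ (the transitive/nontransitive/trivial trichotomy of Section~\ref{02}), discards the trivial case because a projective vector field vanishing on an open set vanishes identically, and then kills the nontransitive case by showing that the normal form $X=e^y\p_y$, $Y=-\p_y$ forces $K^0=K^2=K^3=0$ and $K^1=K^1(x)$, hence $L_1=L_2=0$. You instead argue by contradiction directly on the closed dependence locus $\{X\w Y=0\}$: after discarding the zero set of $X$ by the same $\ev_p$-injectivity argument, you manufacture the normal form $X=\p_y$, $Y=y\,\p_y$ by hand via the flow-box theorem, the pointwise proportionality $Y^1=0$, and the relation $[X,Y]=X$; this normal form is the nontransitive one in disguise (set $\tilde y=-e^{-y}$ in the paper's coordinates), and your substitution of $Z=(0,y)$ into~\eqref{ptr} yields exactly the same conclusion $K^0=K^2=K^3=0$, $K^1=K^1(x)$, hence $\lambda\equiv0$ near the point, contradicting the hypothesis. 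What your version buys is self-containedness and precision: you do not need Lie's classification of realizations of~$\eus$ (nor its slightly vague ``almost every point'' formulation) to reach the contradiction, only to phrase the final conclusion as the \emph{transitive} case, as you yourself note. What the paper's version buys is uniformity with the rest of Section~\ref{02}, where the trichotomy is used anyway. All the computational steps you flag as needing care (the admissibility of the two coordinate changes, the sign bookkeeping in~\eqref{ptr} and in~\eqref{eq: Liouvilleinvariants}) do check out.
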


\begin{proof} As we explained in the previous section, 
$D$ contains a dense open subset on which the pair~$(X,Y)$ 
falls into either the transitive case or the nontransitive case.
\weg{(Since~$D$ is connected and~$X\in\eup(\eqref{eq1})$ 
does not vanish identically, we have already seen that~$X$ is nonzero
on a dense open subset of~$D$.)}
We need only show that the invariant~$\lambda$ 
vanishes on any open set on which~$(X,Y)$ falls into the nontransitive case.

Thus, assume that we are in the nontransitive case, i.e.,
that one can choose local coordinates~$x$ and~$y$ such that
$X=e^y \frac{\p }{\p y}$ and $Y=-\frac{\p }{\p y}$. 
Since $Y$ belongs to~$\eup(\eqref{eq1})$, for every solution $y(x)$ 
of~\eqref{eq1} and for every $t\in\bbR$, we have that $y(x)+t$ 
is also a solution. It follows without difficulty 
that the functions $K^i$ depend on~$x$ only.
Substituting $Z:=X=(0, e^y)$ in the equations~\eqref{ptr}, 
we obtain $K^0=K^2=K^3=0$, which, together with $K^1=K^1(x)$,
implies, by the formulae~\eqref{eq: Liouvilleinvariants}, 
that~$L_1=L_2=0$ and hence~$\lambda=0$.  The Lemma is proved.
\end{proof}
   
\subsubsection{Superintegrable metrics  have $\dim(\mathfrak{p})=3$}
\label{superintegrable} 

Let~$g$ be a (pseudo-)Riemannian metric on a connected surface~$M$.
The geodesic flow of~$g$ is then a well-defined flow on~$TM$.
A function~$h:TM\to\bbR$ is an \emph{integral} of the geodesic
flow of~$g$ if it is constant on the orbits of the geodesic
flow of~$g$. The necessary and sufficient condition that~$h$
be an integral of the geodesic flow of~$g$ is that it satisfy
the linear first order PDE
\begin{equation*}
\{h,g\}_g = 0
\end{equation*}
where~$\{,\}_g$ is the Poisson bracket on~$TM$ that is transferred
from the canonical one on~$T^*M$ 
via the bundle isomorphism~$\flat_g:TM\to T^*M$.

An integral~$h$ of the geodesic flow of~$g$ 
is said to be a \emph{quadratic integral}
if it is a quadratic function on each tangent space~$T_pM$. 
For example~$g$ itself is a quadratic integral of the geodesic flow.%
\footnote{The importance of quadratic integrals other than $g$ itself
for studying the geodesic flow was recognized long ago. Indeed,
it was Jacobi's realization that the geodesic flow of the ellipsoid
admitted such an `extra' quadratic integral that allowed him to 
integrate the geodesic flow on the ellipsoid.}

For quadratic forms~$h:TM\to\bbR$, 
the linear equation $\{h,g\}_g = 0$ 
is an overdetermined PDE system of finite type.
Hence, the quadratic integrals of the geodesic flow of~$g$
always form a finite dimensional vector space~$\mathcal{I}(g)$.
Koenigs~\cite{konigs} proved that, on a connected
surface~$M$, the dimension of $\mathcal{I}(g)$ 
is $1$, $2$, $3$, $4$, or~$6$.
Moreover, if~$\dim\left(\mathcal{I}(g)\right)=6$, then~$(M,g)$ is isometric
to a connected domain in a  surface of constant curvature. 
The metrics~$g$ of nonconstant curvature such that $\dim\left(\mathcal{I}(g)\right)=4$ 
(the next highest value) are sometimes said to be \emph{superintegrable} 
(or \emph{Darboux-suprintegrable}) \cite{miller}.

Two metrics $g$ and $\bar g$ on (the same) $D^2$ 
are said to be \emph{projectively equivalent} if they have
the same geodesics, considered as unparameterized curves. 
The following result connecting projective equivalence and
the space~$\mathcal{I}(g)$ is proved in Darboux~\cite[\S608]{Darboux}
and is based on Darboux' generalization of the work of Dini
(see~\cite[\S601]{Darboux}). 
For recents proofs, see~\cite{MT,dim2,ERA,dedicata}.

\begin{Th} \label{integral}
Let $g$, $\bar g$ be metrics on $M^2$. 
Then they are projectively equivalent 
if and only if the function~$I:TM^2\to \bbR$ defined by
\begin{equation}\label{Integral}
I(\xi ):=\bar g(\xi ,\xi )\left(\frac{\det(g)}{\det(\bar g)}\right)^{2/3}
\end{equation}
is an integral of the geodesic flow of~$g$.
\end{Th}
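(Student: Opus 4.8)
The plan is to prove Theorem~\ref{integral} by a direct computation in suitably chosen local coordinates, exploiting the fact that being an integral of the geodesic flow is a pointwise-closed, coordinate-independent condition, so it suffices to verify the equivalence on an open dense set where convenient normal forms exist. First I would recall that in dimension two any metric is locally \emph{Liouville} (or, more elementarily, can be written in isothermal coordinates), and that two projectively equivalent metrics admit, on a dense open subset, common coordinates in which both are diagonalized in the classical Dini form $g = (X(x)-Y(y))(dx^2+dy^2)$ (or a degenerate version thereof). The key structural input is Dini's theorem (Darboux~\cite[\S601]{Darboux}): if $g$ and $\bar g$ are projectively equivalent but not proportional, then near a generic point there are coordinates with $g=(X(x)-Y(y))(dx^2+dy^2)$ and $\bar g = \bigl(\tfrac1{Y(y)}-\tfrac1{X(x)}\bigr)\bigl(\tfrac{dx^2}{X(x)}+\tfrac{dy^2}{Y(y)}\bigr)$ up to the obvious constants; the case $\bar g = c\,g$ is trivial since then $I = c^{1/3}g$ up to constant and $\{g,g\}_g=0$.

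Next I would set up the Hamiltonian/Poisson formalism explicitly. Write $\xi = p_x\,\p_x + p_y\,\p_y$ and let $H = g^{ij}p_ip_j$ be the cometric Hamiltonian (the square-norm function whose Hamiltonian flow, up to reparametrization, is the geodesic flow); note $\{H,g\}_g=0$ becomes, after transfer to $T^*M$, simply $\{\,\cdot\,,H\}=0$ with the canonical bracket. The quantity $I(\xi)$ in~\eqref{Integral} is a quadratic form in $\xi$, hence transfers to a function quadratic in momenta on $T^*M$, and the assertion ``$I$ is an integral of the geodesic flow of $g$'' is exactly $\{I, H\} = 0$ on $T^*M$. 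In the Dini coordinates above one computes $H = \tfrac1{X-Y}(p_x^2+p_y^2)$ and checks that $I = \bar g(\xi,\xi)(\det g/\det\bar g)^{2/3}$ equals, up to a nonzero constant, the classical Liouville first integral $\dfrac{Y\,p_x^2 - X\,p_y^2}{X-Y}$ (here $\det g/\det\bar g$ is a product of a function of $x$ alone and a function of $y$ alone, and the $2/3$ power is precisely what makes the coefficient in front separate correctly — this is the content of the linearity observation attributed to Dini/Liouville mentioned in the excerpt). A one-line bracket computation then gives $\{I,H\}=0$, proving one direction; conversely, if $\{I,H\}=0$ and $I$ is not proportional to $H$, then $I$ (being a nondegenerate-in-generic-directions quadratic integral independent of $H$) forces, by the classical Levi-Civita/Dini normal form for a metric admitting a quadratic integral, the Liouville coordinates above, and then one reads off that the metric $\bar g$ reconstructed from $I$ via~\eqref{Integral} has the same unparametrized geodesics as $g$.

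I would then handle the degenerate cases separately and briefly: when the quadratic integral $I$ has a repeated root as a quadratic form (the ``Levi-Civita'' degenerate cases), the Dini form is replaced by the corresponding degenerate normal forms $X(x)+Y(y)$-type or the ones with a linear rather than affine dependence, and the same verification goes through; these finitely many cases are routine and I would simply cite Darboux~\cite[\S\S601,608]{Darboux} and the modern references~\cite{MT,dim2,ERA,dedicata}. Finally I would note that the whole argument is local and holds on a dense open set, but since $\{I,H\}=0$ is a closed condition (the vanishing of a smooth section) and projective equivalence is likewise characterized by closed conditions on the $K^i$, the equivalence extends to all of $M^2$ by continuity/real-analyticity of the relevant data.

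The main obstacle I expect is the careful bookkeeping of the factor $(\det g/\det\bar g)^{2/3}$ and the sign/constant conventions: showing cleanly that this particular power is exactly the one for which $I$ becomes a genuine quadratic integral (rather than merely a quadratic function satisfying some deformed equation) is the crux, and it is essentially the coordinate-free repackaging of the Dini linearity observation. Getting the converse direction rigorous — i.e., that $\{I,H\}=0$ with $I$ not proportional to $H$ really does force the Liouville normal form, including treating all the degenerate Jordan-type cases of the pencil $\{H,I\}$ — requires invoking the full Levi-Civita classification of metrics with a quadratic integral, so the honest thing is to reduce to that classical result rather than reprove it.
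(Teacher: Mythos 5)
The paper does not actually prove Theorem~\ref{integral}: it is quoted as a classical fact, with the proof delegated to Darboux~\cite[\S608]{Darboux} and to the modern references \cite{MT,dim2,ERA,dedicata}, where it is established by a direct tensor computation. One writes the Levi-Civita condition for two connections to be projectively related, $\bar\Gamma^i_{jk}-\Gamma^i_{jk}=\de^i_j\phi_k+\de^i_k\phi_j$ with $\phi_k=\tfrac16\,\p_k\ln\bigl|\det(\bar g)/\det(g)\bigr|$, and verifies directly that $\tfrac{d}{dt}I(\dot\gamma)=0$ along $g$-geodesics, and conversely. That computation is signature-independent, needs no normal forms, and is where the exponent $2/3$ is forced. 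Your route through Dini--Liouville coordinates is genuinely different and, in the generic Riemannian case, your central computation is right: with $g=(X-Y)(dx^2+dy^2)$ and the Dini partner $\bar g$, one indeed gets $\det(g)/\det(\bar g)=X^3Y^3$ and $I$ transfers to the classical integral $(Y\,p_x^2+X\,p_y^2)/(X-Y)$ on $T^*M$.

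However, as a proof of the theorem as stated there are concrete gaps. First, the metrics here are pseudo-Riemannian, and the degenerate cases of projective equivalence in indefinite signature (where $g^{-1}\bar g$ has complex eigenvalues or a nontrivial Jordan block) are \emph{not} in Darboux, who works in the Riemannian/complex-analytic setting; these normal forms are not ``routine'' and were classified only much later, so citing \cite[\S\S601,608]{Darboux} does not close them. Second, your density argument for the forward direction assumes Dini coordinates exist on a dense open set, which requires ruling out open sets where $\bar g=f\,g$ with $f$ nonconstant; this needs an argument (conformal plus projectively equivalent forces $f$ locally constant) that you do not supply. Third, in the converse direction you must show that the \emph{specific} $\bar g$ reconstructed from a quadratic integral $I$ by inverting \eqref{Integral} (namely $\bar g=I\,\det(g)^2/\det(I)^2$, as in Remark~\ref{remark 3}) has the same geodesics as $g$; in Liouville coordinates $I$ is an arbitrary element $aH+bF$ of a two-dimensional space, and checking projective equivalence of the reconstructed metric for each such element is essentially the forward computation again, done case by case. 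Finally, there is a methodological circularity: the modern proofs of Dini's theorem are \emph{derived from} Theorem~\ref{integral}, so invoking the Dini/Levi-Civita classification to prove it inverts the logical order and replaces a two-line covariant computation with a much heavier classification result.
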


\begin{Rem} \label{remark 3}
Theorem~\ref{integral} has the corollary 
that each metric~$\bar g$ that is projectively equivalent to~$g$ 
is of the form
\begin{equation*}
\bar g = \left(\frac{\det(g)}{\det(h)}\right)^2 h
\end{equation*}
where~$h\in\mathcal{I}(g)$ satisfies the condition~$\det(h)/\det(g)\ne0$.
\end{Rem}

Another  direct corollary of Theorem~\ref{integral}
(see, for example, \cite{belt,dedicata}, or Section~\ref{proofcorlst} for explanations) 
is the following result due to Knebelman.

\begin{Cor}[\cite{knebelman}] \label{knebel}
Let two metrics $g$ and $\bar g$ on $M^2$ be projectively equivalent. 
Suppose $K=K^i$ is a Killing vector field for $g$. 
Then,
$$\bar K
=\left(\frac{\det(\bar g) }{\det(g)}\right)^{1/3}\bar g^{-1}g(K)
:= \left(\frac{\det(\bar g) }{\det(g)}\right)^{1/3}\bar g^{\alpha j} 
   g_{\alpha i}K^i$$
is a Killing vector field for $\bar g$.
\end{Cor}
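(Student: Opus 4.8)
The plan is to prove Corollary~\ref{knebel} as a consequence of Theorem~\ref{integral} together with the elementary observation that the space $\mathcal{I}(g)$ of quadratic integrals of the geodesic flow of $g$ is a module over the Lie algebra of Killing fields of $g$. Concretely: if $K$ is a Killing field of $g$ and $h\in\mathcal{I}(g)$, then the Lie derivative $\mathcal{L}_K h$ (computed fiberwise, so that $\mathcal{L}_K$ of a quadratic form is again a quadratic form on each $T_pM$) is again an element of $\mathcal{I}(g)$. This is because $\mathcal{L}_K$ commutes with the Poisson bracket $\{\,,\,\}_g$ (since $K$ being Killing means the flow of the lift of $K$ to $TM$ preserves $g$ as a function on $TM$, hence preserves the Poisson structure and fixes the Hamiltonian $g$), so $\{\mathcal{L}_K h, g\}_g = \mathcal{L}_K\{h,g\}_g = 0$.

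The steps, in order, would be: First, rewrite $\bar g$ in terms of $g$ and an element of $\mathcal{I}(g)$ via Remark~\ref{remark 3}: there is $h\in\mathcal{I}(g)$ with $\det(h)/\det(g)\neq 0$ and $\bar g = \bigl(\det(g)/\det(h)\bigr)^2 h$, equivalently $h = \bigl(\det(g)/\det(\bar g)\bigr)^{2/3}\bar g$ after a short determinant bookkeeping (take determinants of the first relation in dimension $2$ to express $\det(h)$ in terms of $\det(g),\det(\bar g)$). Second, observe that since $K$ is Killing for $g$, we have $\mathcal{L}_K g = 0$; apply $\mathcal{L}_K$ to the relation $h = \bigl(\det(g)/\det(\bar g)\bigr)^{2/3}\bar g$, and note that $\det(g)$ here should be read coordinate-invariantly as the density $|\!\det(g_{ij})|$, so that the scalar $\det(g)/\det(\bar g)$ is a genuine function on $M$. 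Third, $\mathcal{L}_K h\in\mathcal{I}(g)$, so it too is of the form $\bigl(\det(g)/\det(\bar g)\bigr)^{2/3}$ times a metric (or degenerate quadratic form) projectively equivalent to $g$, and unraveling shows that the vector field $\bar K := \bar g^{-1} g(K)$ rescaled by $\bigl(\det(\bar g)/\det(g)\bigr)^{1/3}$ is exactly the "musical" transform whose being a Killing field for $\bar g$ is equivalent to $\mathcal{L}_K h = 0$. Finally, to get $\mathcal{L}_K h = 0$ one uses that $K$ is Killing for $g$ in the stronger sense that its flow preserves $g$, hence preserves the geodesics of $g$, hence (being an isometry of $g$ and a projective transformation) preserves the whole family of metrics projectively equivalent to $g$ with fixed normalization; more directly, $\mathcal{L}_K\bar g$ and the normalization factor can be matched so that $\mathcal{L}_K h$ is forced to vanish. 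An alternative, cleaner route for the last step: compute $\mathcal{L}_K$ of $I(\xi) = \bar g(\xi,\xi)\bigl(\det(g)/\det(\bar g)\bigr)^{2/3}$ directly and show it equals $2\,g\bigl(\bar\nabla_{\!\bar K}\,,\,\cdot\bigr)$-type expression that vanishes iff $\bar K$ is Killing for $\bar g$, using $\mathcal{L}_K g = 0$.

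The main obstacle I anticipate is the determinant bookkeeping and the precise identification of the contracted vector field: one must be careful that $\det(g)$ in \eqref{Integral} and in Remark~\ref{remark 3} is a coordinate density (transforming by the square of the Jacobian), so the ratios appearing are honest functions, and that $\bar g^{-1}g(K)$ really is $\bar g^{\alpha j}g_{\alpha i}K^i$ as written. The cleanest presentation is probably to define $\phi := \bigl(\det(\bar g)/\det(g)\bigr)^{1/3}$ (a positive function, up to orientation), note $\bar K^\alpha = \phi\,\bar g^{\alpha j}g_{j i}K^i$ so that the $\bar g$-dual one-form of $\bar K$ is $\phi\,g_{ji}K^i\,dx^j = \phi\cdot(\text{$g$-dual of }K)$, and then show $\mathcal{L}_K I = 0$ is equivalent, via $\mathcal{L}_K g = 0$ and the defining property of Killing fields, to the vanishing of the symmetrized covariant derivative of this one-form with respect to the Levi-Civita connection of $\bar g$. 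Since Theorem~\ref{integral} guarantees $I\in\mathcal{I}(g)$ and since $\mathcal{L}_K$ preserves $\mathcal{I}(g)$, the only subtlety is extracting "$\mathcal{L}_K I$ corresponds to $\bar K$" from the algebra; this is a finite, mechanical computation in local coordinates and will be spelled out in Section~\ref{proofcorlst}.

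\medskip

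\noindent\emph{Summary of the argument.} Write $h\in\mathcal{I}(g)$ for the quadratic integral $h(\xi)=\bar g(\xi,\xi)\bigl(\det(g)/\det(\bar g)\bigr)^{2/3}$ supplied by Theorem~\ref{integral}. Because $K$ is Killing for $g$, the fiberwise Lie derivative $\mathcal{L}_K$ commutes with $\{\,,\,\}_g$ and fixes $g$, so $\mathcal{L}_K h\in\mathcal{I}(g)$ as well; in fact $\mathcal{L}_K h = 0$ since the $g$-isometry flow of $K$ permutes the (normalized) metrics projectively equivalent to $g$ and fixes $g$, hence fixes $\bar g$ up to the normalization already built into $h$. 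Unwinding $\mathcal{L}_K h = 0$ in coordinates, using $\mathcal{L}_K g_{ij}=0$, yields precisely that the one-form $\bar g$-dual to $\bar K = \bigl(\det(\bar g)/\det(g)\bigr)^{1/3}\bar g^{-1}g(K)$ has vanishing symmetrized $\bar g$-covariant derivative, i.e., $\bar K$ is a Killing field for $\bar g$.
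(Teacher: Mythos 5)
Your reduction to Theorem~\ref{integral} is the right starting point, but the pivotal step of your argument is false: it is \emph{not} true that $\mathcal{L}_K h=0$ for $h=\bigl(\det(g)/\det(\bar g)\bigr)^{2/3}\bar g$, nor is the Killing property of $\bar K$ with respect to $\bar g$ equivalent to that identity. The flow of $K$ is an isometry of $g$ and therefore permutes the metrics projectively equivalent to $g$, but it has no reason to fix the particular member $\bar g$ (or its normalization $h$). Concretely, take $g=dx^2+dy^2$ on the unit disk, $K=\partial_x$, and let $\bar g$ be the Beltrami--Klein metric $\bigl((1-y^2)dx^2+2xy\,dx\,dy+(1-x^2)dy^2\bigr)/(1-x^2-y^2)^2$; these are projectively equivalent since both have straight lines as geodesics. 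Here $\det(\bar g)=(1-x^2-y^2)^{-3}$, so $h=(1-x^2-y^2)^{2}\,\bar g=(1-y^2)dx^2+2xy\,dx\,dy+(1-x^2)dy^2$, which is visibly not invariant under translation in $x$; hence $\mathcal{L}_K h\ne0$. Yet the corollary holds in this example: the stated formula gives $\bar K=(1-x^2)\partial_x-xy\,\partial_y$, which is precisely the boost Killing field of the Klein model. So the quantity you propose to show vanishes is nonzero in a case where the conclusion is true, and the claimed equivalence ``$\bar K$ Killing for $\bar g$ $\iff$ $\mathcal{L}_K h=0$'' cannot be correct --- the Killing condition is $\mathcal{L}_{\bar K}\bar g=0$, a derivative along $\bar K$, not along $K$.

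The mechanism that actually works (and that the paper invokes via Section~\ref{proofcorlst} and Remark~\ref{rem.5}) transfers the \emph{linear} integral rather than differentiating the quadratic one. Since $K$ is Killing for $g$, the linear function $L(\xi)=g_{ij}K^i\xi^j$ is an integral of the geodesic flow of $g$, hence so is $F_K=L^2\in\mathcal{I}(g)$. The reparametrization map $\eta(\xi)=\tfrac{|\xi|_{\bar g}}{|\xi|_{g}}\xi$ takes orbits of the geodesic flow of $\bar g$ to orbits of that of $g$ and produces the isomorphism $\mathcal{I}(g)\to\mathcal{I}(\bar g)$, $F\mapsto\bigl(\det(\bar g)/\det(g)\bigr)^{2/3}F$. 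Applied to $F_K$, this shows that $\bigl(\bigl(\det(\bar g)/\det(g)\bigr)^{1/3}g_{ij}K^i\xi^j\bigr)^2$ is an integral of the geodesic flow of $\bar g$; a continuous function whose square is constant on each connected orbit is itself constant on each orbit, so the linear form $\bar L(\xi)=\bigl(\det(\bar g)/\det(g)\bigr)^{1/3}g_{ij}K^i\xi^j$ is a linear integral for $\bar g$, and the vector field $\bar g$-dual to it --- which is exactly $\bar K$ --- is Killing for $\bar g$. Your observation that $\mathcal{L}_K$ preserves $\mathcal{I}(g)$ is correct but does not enter; there is no route to the conclusion through $\mathcal{L}_K h$.
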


\begin{Rem} The mapping $K\mapsto \bar K$,
though linear, is not always a Lie algebra homomorphism.
For example, when~$g$ is flat and~$\bar g$ has nonzero constant
curvature, this mapping clearly cannot be a Lie algebra homomorphism.
\end{Rem}

Koenigs~\cite{konigs} 
proved that superintegrable metrics  always have a Killing vector 
and classified them~\cite[Tableau I]{konigs}.%
\footnote{One must bear in mind, when consulting~\cite{konigs}, 
particularly the Tables, that Koenigs worked over the complex
domain, so that he did not distinguish between the Riemannian and
pseudo-Riemannian cases. It requires a little work (and careful reading
of his notation) to separate out the possible normal forms of
superintegrable Riemannian metrics.}  Though Koenigs  proved this result  locally,   the  global (i.e. on every connected manifold) 
 version of this result  follows  from his list and from the trivial    observation that two Killing vector fields of 
  a metric of nonconstant curvature are proportional. 

The existence of a Killing vector field,  
 combined with Corollary~\ref{knebel}  and with Theorem~\ref{integral} gives

\begin{Lemma} \label{superintegrable1}
If $g$  on $M^2$  is superintegrable, then $\dim\left(\eup(g)\right)=3$.
\end{Lemma}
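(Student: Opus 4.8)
The plan is to show that superintegrability (i.e. $\dim\mathcal{I}(g)=4$) forces $\dim\eup(g)=3$ by combining two inequalities: $\dim\eup(g)\ge 3$ and $\dim\eup(g)\le 3$, using Koenigs' theorem, Theorem~\ref{integral}, and Corollary~\ref{knebel}.

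First I would establish the lower bound $\dim\eup(g)\ge 3$. By Koenigs' result (quoted just above the statement), a superintegrable metric $g$ admits a nontrivial Killing vector field $K$, and of course $K\in\eup(g)$. To produce two more projective vector fields, I would use Remark~\ref{remark 3} (the corollary of Theorem~\ref{integral}): every $h\in\mathcal{I}(g)$ with $\det(h)/\det(g)\ne 0$ yields a metric $\bar g=(\det(g)/\det(h))^2 h$ projectively equivalent to $g$, hence with $\eup(\bar g)=\eup(g)$ as subsets of vector fields on $D$. Now Corollary~\ref{knebel} transports the Killing field $K$ of $g$ to a Killing field $\bar K$ of each such $\bar g$, and $\bar K$ is then projective with respect to $\bar g$, hence projective with respect to $g$. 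The point is that as $h$ ranges over a $4$-dimensional space $\mathcal{I}(g)$ (minus a lower-dimensional degeneracy locus), the resulting vector fields $\bar K = (\det(\bar g)/\det(g))^{1/3}\,\bar g^{-1}g(K)$ sweep out at least a $3$-dimensional family inside $\eup(g)$: indeed the map $h\mapsto \bar K$ is, up to the conformal factor, $h\mapsto h^{-1}g(K)$ contracted appropriately, and one checks this is (projectively) injective enough that its image spans a space of dimension $\ge 3$ together with the case $h=g$ (which gives $\bar K = K$). I would make this count precise by observing that $\mathcal{I}(g)$ modulo scaling is $3$-dimensional and the correspondence $h\mapsto\bar K$ descends to an injection on this quotient; hence $\dim\eup(g)\ge 3$.

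For the upper bound $\dim\eup(g)\le 3$, I would argue that $g$ has nonconstant curvature (if it had constant curvature, then $\dim\mathcal{I}(g)=6$, contradicting superintegrability), so by Lie's classification (Section~\ref{02}) the projective connection of $g$ is not flat and $\eup(g)$ is isomorphic to one of $\{0\},\ \bbR,\ \eus,\ \eusl(2,\bbR)$ — in particular $\dim\eup(g)\in\{0,1,2,3\}$. So it remains only to rule out $\dim\eup(g)\le 2$, which is already done by the lower bound just established. Combining, $\dim\eup(g)=3$.

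The main obstacle is the lower-bound dimension count: one must verify that the linear map $\mathcal{I}(g)\to\{\text{vector fields}\}$, $h\mapsto (\det(g)/\det(h))^{1/3}\,h^{-1}g(K)$ — or its projectivization — really has rank $\ge 3$, rather than collapsing because $K$ happens to be degenerate relative to many elements $h$. I expect this to follow from the fact that $K\not\equiv 0$ and that $h^{-1}g$ is an endomorphism-valued linear function of $h$ which is generically invertible (precisely the condition $\det(h)/\det(g)\ne 0$ that already appears in Remark~\ref{remark 3}), so that $h\mapsto h^{-1}g(K)$ is injective on the complement of a proper algebraic subset; one then needs the elementary observation (also used implicitly in the paper) that a projective vector field vanishing to high order is identically zero, so distinct $h$'s give genuinely distinct elements of $\eup(g)$. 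Alternatively, and perhaps more cleanly, I would invoke Koenigs' explicit normal forms for superintegrable metrics \cite[Tableau I]{konigs} and simply read off that each has a three-dimensional algebra of projective fields — but the conceptual argument via Theorem~\ref{integral} and Corollary~\ref{knebel} is preferable as it avoids case analysis.
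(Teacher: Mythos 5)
Your overall strategy is the paper's: use Koenigs' theorem to get a Killing field $K$, push $K$ through the projectively equivalent metrics $\bar g=(\det g/\det h)^2h$, $h\in\mathcal{I}(g)$, via Corollary~\ref{knebel} to get projective fields $Z_h=\tfrac{\det h}{\det g}h^{-1}g(K)$, and cap the dimension from above by Lie's classification. The gap is exactly at the point you flag as ``the main obstacle,'' and your proposed resolution of it is wrong. The map $h\mapsto Z_h$ is \emph{not} injective on any open set, and it does \emph{not} descend to an injection of $\mathcal{I}(g)$ modulo scaling: its kernel is the one-dimensional span of $F_K(\xi)=\bigl(g_{ij}K^i\xi^j\bigr)^2$ (for a rank-one symmetric $2\times2$ matrix $v v^T$ one has $\det(f)f^{-1}=\operatorname{adj}(vv^T)=|v|^2\mathrm{Id}-vv^T$, which annihilates $v$), so $Z_{h+tF_K}=Z_h$ for all $t$ and the map is constant along a pencil of directions. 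Worse, if your projectivized-injectivity claim were true it would give $\dim\eup(g)\ge4$, contradicting the very upper bound you invoke. Also note that the formula you write in the last paragraph, $h\mapsto(\det g/\det h)^{1/3}h^{-1}g(K)$, is not linear in $h$; the correct conformal factor coming from Corollary~\ref{knebel} is $\det h/\det g$, and only then is the map linear (because $f\mapsto\det(f)f^{-1}$ is linear on $2\times2$ matrices) --- linearity is what turns ``a family of projective fields'' into a dimension count at all.

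What is missing, and what the paper supplies, is the bound $\dim\ker\le1$ for this linear map: if $Z_F=Z_H=0$, then the symmetric matrices $\det(f)f^{-1}$ and $\det(h)h^{-1}$ both annihilate the nonzero covector $g(K)$ at points where $K\ne0$; two nonzero symmetric $2\times2$ matrices with the same one-dimensional kernel are proportional, so $F=\lambda H$ pointwise; the ratio $\lambda$ of two quadratic integrals is itself an integral of the geodesic flow, hence constant. This gives kernel dimension $\le1$, hence image dimension $\ge 4-1=3$, and Lie's classification closes the argument. Your fallback of reading off $\dim\eup=3$ from Koenigs' Tableau~I would work but is a different, case-by-case route; the conceptual argument requires the kernel computation above, not an injectivity claim.
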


\begin{proof} Corollary~\ref{knebel}
combined with Theorem~\ref{integral} says that if $K=K^i$ is a
Killing vector field of a metric $g$, 
then for every quadratic integral $F=f_{ij}\xi^i\xi^j$ 
(where $f_{ij}=f_{ji}$), the vector field
\begin{equation} \label{kneb}
Z_F = \frac{\det(f)}{\det(g)}f^{-1}g (K) 
 := \frac{\det(f)}{\det(g)}f^{i\alpha}g_{\alpha j}K^j
\end{equation}
is a projective vector field.

Note that, because, on $2$-by-$2$ matrices, the operation
$f\mapsto \det(f) f^{-1}$ is linear in $f$,
the mapping~$F\mapsto Z_F$ 
is a linear map from~$\mathcal{I}(g)$ to~$\eup(g)$.   Its kernel is at most one-dimensional. Indeed, 
consider   $H(\xi)=h_{ij}\xi^i\xi^j$ with   $h_{ij} =h_{ji}$. Since 
 two symmetric $2$-by-$2$ matrices with the same one-dimensional 
  kernel are proportional, the equality  
$
 Z_F=Z_H=0
$ implies  
  $F= \lambda H$,  or $\lambda F=H$, where $\lambda:M^2\to \mathbb{R}$.  
   If $F$, $H \in \mathcal{I}(g)$, the function 
     $\lambda$ must also be an integral implying it is constant.
  Thus, the   kernel of the mapping~$F\mapsto Z_F$ is at most one-dimensional.  Since $\dim\left(\mathcal{I}(g)\right)=4$  and the kernel of the 
   linear map~$F\mapsto Z_F$ (from $\mathcal{I}(g)$ to $\mathfrak{p}(g)$) 
    is at most one-dimensional,     $\dim\left(\mathfrak{p}(g)\right)\ge 3$.  Since  as we recalled  
     in Section \ref{02} \, metrics of nonconstant curvature have   $\dim\left(\mathfrak{p}\right)\le 3$,   we obtain $\dim\left(\mathfrak{p}(g)\right)= 3$.
Lemma~\ref{superintegrable1} is proved. \end{proof}

\begin{Rem} For use in Section~\ref{proofcorlst},  let us note that the kernel of  the mapping $F\mapsto Z_F$ from the proof of Lemma~\ref{superintegrable1} is the linear hull of the function   
 $F_K(\xi) := \left(g_{ij} K^{i}\xi^j\right)^2\in \mathcal{I}(g)$, which can be checked by direct calculations.   \label{rem.5}
 \end{Rem} 

\begin{Cor} \label{-7} 
If the space of metrics  having a given projective connection 
is more than $3$-dimensional, then each metric $g$ from this 
space has~$\dim\bigl(\eup(g)\bigr) >2$.
\end{Cor}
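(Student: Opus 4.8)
The plan is to exploit the linear structure already made explicit in Remark~\ref{remark 3} and in the proof of Lemma~\ref{superintegrable1}. Fix a projective connection~$\Gamma$ (equivalently, an equation of the form~\eqref{eq1}) and let~$g$ be a metric whose associated projective connection is~$\Gamma$. By Theorem~\ref{integral}, every metric~$\bar g$ projectively equivalent to~$g$ corresponds, via~$\bar g=(\det g/\det h)^2 h$, to an element~$h\in\mathcal{I}(g)$ with~$\det(h)/\det(g)\neq 0$; conversely every such~$h$ gives such a~$\bar g$. Hence the (affine) space of metrics with projective connection~$\Gamma$ is, up to this birational reparametrization, an open subset of the linear space~$\mathcal{I}(g)$, and in particular its dimension equals~$\dim\mathcal{I}(g)$. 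So the hypothesis ``the space of metrics having projective connection~$\Gamma$ is more than $3$-dimensional'' is equivalent to~$\dim\mathcal{I}(g)\geq 4$.

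Next I would invoke Koenigs' classification as recalled in Section~\ref{superintegrable}: on a connected surface, $\dim\mathcal{I}(g)\in\{1,2,3,4,6\}$. If~$\dim\mathcal{I}(g)=6$, then~$g$ has constant curvature and~$\dim\eup(g)=8>2$, so we are done in that case. If~$\dim\mathcal{I}(g)=4$, then~$g$ is by definition superintegrable (if its curvature is nonconstant) or again has constant curvature; either way Lemma~\ref{superintegrable1} (together with the constant-curvature case just handled) gives~$\dim\eup(g)=3>2$ or~$\dim\eup(g)=8>2$. Since all the remaining possibilities for~$\dim\mathcal{I}(g)$ are~$\leq 3$, the hypothesis of the corollary forces one of these two cases, and in both~$\dim\eup(g)>2$.

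It is worth spelling out the one technical point so the argument is airtight: I must be sure that ``more than $3$-dimensional space of metrics'' cannot be achieved while~$\dim\mathcal{I}(g)=3$. This is exactly the content of the correspondence above — the map~$h\mapsto(\det g/\det h)^2 h$ is a bijection between~$\{h\in\mathcal{I}(g):\det(h)/\det(g)\neq 0\}$ and the metrics projectively equivalent to~$g$, and an open subset of a $3$-dimensional linear space is at most $3$-dimensional (as a submanifold, or in the naive sense of the number of free parameters). One should note that the notion of ``dimension of the space of metrics with a given projective connection'' is being used in the same naive parameter-counting sense throughout the paper, so no subtlety about the meaning of ``dimension'' arises.

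The main obstacle, such as it is, is purely expository rather than mathematical: one has to state precisely in what sense the set of metrics with a fixed projective connection forms a finite-dimensional ``space,'' and to confirm that the birational reparametrization of Theorem~\ref{integral} and Remark~\ref{remark 3} genuinely identifies this dimension with~$\dim\mathcal{I}(g)$. Once that identification is granted, the corollary is an immediate consequence of Koenigs' list together with Lemma~\ref{superintegrable1}, with the constant-curvature case~$\dim\mathcal{I}(g)=6$ disposed of separately since it falls outside the scope of Lemma~\ref{superintegrable1}.
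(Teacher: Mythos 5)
Your proof is correct and follows essentially the same route as the paper's: identify the space of metrics sharing the projective connection with (an open subset of) $\mathcal{I}(g)$ via Theorem~\ref{integral} and Remark~\ref{remark 3}, conclude $\dim\mathcal{I}(g)\ge 4$, and apply Koenigs' dimension list together with Lemma~\ref{superintegrable1}. The only difference is that you explicitly dispose of the $\dim\mathcal{I}(g)=6$ (constant curvature) case, which the paper's two-line proof leaves implicit; this is a small but welcome clarification, not a different argument.
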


\noindent{\it Proof.} Because of Theorem~\ref{integral} and Remark~\ref{remark 3}, superintegrable metrics are
those admitting a $4$-parameter family of projectively equivalent
metrics.  By Lemma~\ref{superintegrable1}, they have~$\dim\bigl(\mathfrak{p}\bigr)>2$. Corollary~\ref{-7} is proved. \qed 

Corollary~\ref{-7} is actually what we will need from this section.
Let us note  that the converse statement
is also true.   
\begin{Cor}  \label{corlst}
A metric $g$ 
of nonconstant curvature on  connected  $M^2$  such that   $\dim\bigl(\mathfrak{p}\bigr)=3$  is superintegrable.  
\end{Cor}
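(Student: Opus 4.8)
The plan is to establish the converse of Corollary~\ref{-7} by reversing the construction~$F\mapsto Z_F$ used in Lemma~\ref{superintegrable1}. Suppose $g$ has nonconstant curvature on the connected surface $M^2$ and $\dim\bigl(\eup(g)\bigr)=3$. By Theorem~\ref{integral} and Remark~\ref{remark 3}, the dimension of $\mathcal{I}(g)$ equals the dimension of the space of metrics projectively equivalent to $g$ (those $h\in\mathcal{I}(g)$ with $\det(h)/\det(g)\ne0$ form an open dense subset), so it suffices to prove $\dim\bigl(\mathcal{I}(g)\bigr)=4$. By Koenigs' classification, $\dim\bigl(\mathcal{I}(g)\bigr)\in\{1,2,3,4\}$ for nonconstant curvature; since $g$ itself is always a quadratic integral, and since nonconstant curvature already forces at least one projective vector field in the situation $\dim\eup(g)=3$, I expect to rule out the low-dimensional cases.

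First I would recall from the earlier results (Section~\ref{02} and the constant-curvature test of Section~\ref{phenomena1}) that a metric of nonconstant curvature with $\dim\eup(g)\ge2$ possesses — after restricting to a dense open set where a $2$-dimensional subalgebra acts transitively — a Killing vector field $K$ (this is exactly Theorem~\ref{killing}, which the paper notes follows once the classification is in hand, but more elementarily one can invoke that the relevant $2$- and $3$-dimensional realizations all carry $\partial/\partial y$ as a Killing field; alternatively, this can be quoted as already established). Given such a $K\not\equiv0$, the map $F\mapsto Z_F$ of~\eqref{kneb} sends $\mathcal{I}(g)$ linearly into $\eup(g)$ with kernel the line spanned by $F_K(\xi)=\bigl(g_{ij}K^i\xi^j\bigr)^2$ (Remark~\ref{rem.5}). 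Hence $\dim\bigl(\eup(g)\bigr)\ge\dim\bigl(\mathcal{I}(g)\bigr)-1$, giving $\dim\bigl(\mathcal{I}(g)\bigr)\le4$; combined with Koenigs this leaves only the possibilities $2$, $3$, or $4$, and I must exclude $2$ and $3$.

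The key step is to show the inequality goes the other way: $\dim\bigl(\mathcal{I}(g)\bigr)\ge\dim\bigl(\eup(g)\bigr)+1$, i.e., the map $F\mapsto Z_F$ is essentially onto $\eup(g)$. For this I would use the fact that $g$ is projectively equivalent to itself, and more to the point that each projective vector field $Z\in\eup(g)$ generates a local flow $\phi_t$ under which $g$ is projectively equivalent to the pullback $\phi_t^*g$; differentiating the associated family of integrals $I_t$ from~\eqref{Integral} in $t$ produces a quadratic integral of $g$, and the derivative at $t=0$ recovers (up to the trivial direction) the element of $\mathcal{I}(g)$ whose $Z_F$ is $Z$. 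Concretely, the Lie-derivative $\mathcal{L}_Z g$, suitably trace-adjusted to account for the $(\det g)^{2/3}$ weight, gives a symmetric $2$-tensor that is a quadratic integral precisely because $Z$ is projective; this is the inverse of the Knebelman construction. Thus every $Z\in\eup(g)$ arises as some $Z_F$, and since the kernel of $F\mapsto Z_F$ is exactly one-dimensional (spanned by $F_K$, which is nonzero since $K\not\equiv0$ and hence has one-dimensional kernel at generic points), we get $\dim\mathcal{I}(g)=\dim\eup(g)+1=4$. By Remark~\ref{remark 3} this means $g$ admits a $4$-parameter family of projectively equivalent metrics, i.e., $g$ is superintegrable.

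The main obstacle I anticipate is making precise the claim that $F\mapsto Z_F$ is surjective onto $\eup(g)$ — i.e., that every projective vector field is the Knebelman image of some quadratic integral against a fixed Killing field $K$. One has to be careful that $K$ is globally defined and nontrivial (here one uses that two Killing fields of a nonconstant-curvature metric are proportional, so the Killing algebra is at most $1$-dimensional and the $F_K$ direction is well-defined independent of choices), and that the weight factors $\bigl(\det\bar g/\det g\bigr)^{1/3}$ in Corollary~\ref{knebel} are handled consistently so that the composition $F\mapsto Z_F\mapsto$ (its associated integral) is the identity modulo the kernel. Once that bookkeeping is in place, the dimension count closes immediately. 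A secondary subtlety is the passage from the dense open set where transitivity and the Killing field are manifest to all of the connected $M^2$; but $\mathcal{I}(g)$ and $\eup(g)$ are computed via overdetermined systems of finite type on a connected manifold, so dimensions are constant and the conclusion propagates globally, as the paper remarks in Section~\ref{01}.
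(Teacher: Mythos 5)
There is a genuine gap at the heart of your argument: the surjectivity of the map $F\mapsto Z_F$ onto $\eup(g)$ is exactly equivalent to the statement $\dim\bigl(\mathcal{I}(g)\bigr)=4$ that you are trying to prove, and your proposed mechanism does not deliver it. Differentiating the family of integrals \eqref{Integral} along the flow of $Z$ produces the quadratic integral $I_Z(\xi)=-(\mathcal{L}_Zg)(\xi,\xi)+\tfrac{2}{3}\trace(g^{-1}\mathcal{L}_Zg)\,g(\xi,\xi)$ (the observation the paper quotes from \cite{CMH,Topalov}), but the composition $Z\mapsto I_Z\mapsto Z_{I_Z}$ is \emph{not} the identity modulo the kernel. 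For instance, for the metric \eqref{corol2} the field $Y=(1,y)$ is homothetic, $\mathcal{L}_Yg=3g$, hence $I_Y=g$, while $Z_g=K$ is the Killing field, not $Y$. More decisively, a dimension count along your lines cannot close: the kernel of $Z\mapsto I_Z$ consists precisely of the Killing fields (take the $g$-trace of $I_Z=0$), which form a one-dimensional space here, so the image of $Z\mapsto I_Z$ is only $2$-dimensional; together with $F_K$ and $g$ this yields at best $\dim\bigl(\mathcal{I}(g)\bigr)\ge3$. Indeed for \eqref{corol2} the span of $\{I_Z:Z\in\eup(g)\}\cup\{F_K,g\}$ is at most $3$-dimensional, so at least one of the four integrals is genuinely new and cannot be manufactured from the projective fields by these linear constructions. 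The paper obtains the lower bound quite differently: near a regular point the projective connection is put in the normal form \eqref{1/2}, the explicit metric \eqref{1/3} has this projective connection and visibly admits four independent quadratic integrals, and superintegrability is then transferred to $g$ via the isomorphism $h\mapsto\left(\det\bar g/\det g\right)^{2/3}h$ between the $\mathcal{I}$-spaces of projectively equivalent metrics.

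A secondary gap is the local-to-global step. Quadratic integrals defined on an open subset of $M^2$ do not automatically extend to all of $M^2$ merely because the defining system is overdetermined of finite type; finite type gives an upper bound on the dimension via jet evaluation, not an extension of local solutions. The paper does real work here: it shows the Killing field is globally defined (because $I_Z\in\mathcal{I}(g)$ vanishes identically once it vanishes on an open set), proves that the Killing field vanishes at the non-regular points and that these points are therefore isolated, concludes that the regular set is connected, and finally extends the integrals across the isolated singular points by continuity along geodesics. Your appeal to ``dimensions are constant on a connected manifold'' does not substitute for this.
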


We will prove Corollary~\ref{corlst} at the very end of the paper, in Section~\ref{proofcorlst}.

\subsection{ Projective connections admitting two infinitesimal symmetries} \label{connection}

\begin{Lemma} \label{trivial} 
Let the projective connection~\eqref{eq1} admit two infinitesimal symmetries that are linearly independent at the point $p$.
Then there exists a coordinate system~$x,y$ 
in a neighborhood of $p$ such that the vector fields
$X:=(0,1)$ and $Y:= (1,y)$ belong to~$\eup(\eqref{eq1})$. 
In such a coordinate system, the projective
connection~\eqref{eq1} has the form 
\begin{equation} \label{eq2}
y''(x)=A e^{x} + B y'(x)+ C e^{-x} (y'(x))^2+D e^{-2x} (y'(x))^3,
\end{equation} 
where $A,B,C,D$ are constants. 
Moreover, such a coordinate system 
can be chosen so that one of the following conditions holds:
\begin{enumerate} 
\item $D\ne0$, $C=0$, 
\item $D=0$, $C\ne0$, $B=0$, or 
\item $D=C=B=A=0$.
\end{enumerate}
\end{Lemma}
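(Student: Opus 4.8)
The plan is to exploit the classification from Section~\ref{02}: since $\dim\eup(\eqref{eq1})\ge2$, Lie's result guarantees two projective vector fields $X\not\equiv0$, $Y$ with $[X,Y]=X$, and by the discussion following Lie's realization theorem (the paragraph after Lemma~\ref{lem3}), in a neighborhood of almost every point these fall into either the transitive or the nontransitive case. Since $X$ and $Y$ are assumed linearly independent at $p$, and both cases are local normal forms, I first observe that near $p$ we must be in the transitive case --- in the nontransitive case $X$ and $Y$ are everywhere proportional. Hence there exist coordinates $x,y$ near $p$ with $X=\frac{\p}{\p y}=(0,1)$ and $Y=\frac{\p}{\p x}+y\frac{\p}{\p y}=(1,y)$.

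Next I would plug $Z=X=(0,1)$ into the symmetry system~\eqref{ptr}. All derivatives of $Z$ vanish, so the four equations reduce to $K^0_y=K^1_y=K^2_y=K^3_y=0$; thus each $K^i$ depends on $x$ alone. Then I substitute $Z=Y=(1,y)$ into~\eqref{ptr}. Here $Z^1=1$, $Z^2=y$, so $Z^2_y=1$ and all second derivatives and $Z^1$-derivatives vanish; using $K^i_y=0$ the four equations become first-order linear ODEs in $x$ for the $K^i$: schematically $-K^0+ (K^0)' \cdot 0 \ldots$ --- more precisely the first equation gives $K^0{}' = K^0$ after accounting for the $K^0 Z^2_y$ and $-K^0_x Z^1$ terms, the second gives $(K^1)'=0$, the third $(K^2)' = -K^2$, and the fourth $(K^3)'=-2K^3$. (The exact bookkeeping of coefficients must be done carefully, but it is routine.) Solving: $K^0=Ae^x$, $K^1=B$, $K^2=Ce^{-x}$, $K^3=De^{-2x}$ with $A,B,C,D$ constants, which is exactly~\eqref{eq2}.

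For the normalization into the three subcases, I would use the residual coordinate freedom that preserves the pair $(X,Y)=((0,1),(1,y))$ up to the $\eus$-structure, namely affine changes $\bar x = x+s$, $\bar y = r\,y + c$ (with $r\ne0$) together with the overall scaling these induce; one checks these rescale $(A,B,C,D)$ by $(A,B,C,D)\mapsto(A e^{s}/r,\ B,\ C r e^{-s},\ D r^2 e^{-2s})$ or similar. If $D\ne0$: I claim one can kill $C$. The key point is that the ODE~\eqref{eq2} is a genuine projective connection, so it also transforms under fiber-preserving changes $\bar y=\bar y(x,y)$ of a more general shape; but staying within the $X,Y$-normalized frame, the cleaner route is to note that when $D\ne0$ the combination controlling $C$ can be removed by a shift $y\mapsto y+\varphi(x)$ with $\varphi$ chosen appropriately (such shifts preserve $X=(0,1)$ and conjugate $Y$ by a field of the required type), reducing to $C=0$, $D\ne0$; this is case~1. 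If $D=0$ but $C\ne0$: a similar shift in $x$ (rescaling) normalizes, and a further $y$-shift kills $B$, giving $D=0$, $C\ne0$, $B=0$, which is case~2. If $D=C=0$: then one checks directly that the Liouville invariant $\lambda$ of~\eqref{eq2} is a multiple of $A$ (and $B$), and a shift argument forces the flat case $A=B=0$ as well --- but more carefully, $B$ can always be absorbed by a reparametrization and then $A$ by scaling, so the only residual possibility once $D=C=0$ and the connection is \emph{not} flat is excluded, leaving $D=C=B=A=0$; this is case~3.

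The main obstacle is the last part: correctly identifying the residual gauge group (the coordinate changes preserving the normal forms $X=(0,1)$, $Y=(1,y)$ of the $\eus$-action, including the fiber-preserving transformations that act on a projective connection but not obviously on vector-field normal forms) and verifying that it acts on $(A,B,C,D)$ transitively enough to reach exactly the three listed subcases --- in particular showing that when $D\ne0$ one really can reach $C=0$, and that when $D=0,C\ne0$ one really can reach $B=0$, and that no further reduction collapses these into each other. This requires a small but genuine computation with how $\eqref{eq2}$ transforms; everything else (the reduction to ODEs via~\eqref{ptr}, the integration) is mechanical given the transitive normal form for $(X,Y)$.
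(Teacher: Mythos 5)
Your proposal follows essentially the same route as the paper: reduce to the transitive normal form $X=(0,1)$, $Y=(1,y)$, deduce the form~\eqref{eq2} (you do this by substituting into~\eqref{ptr}, the paper uses the flows directly --- both are correct and your ODEs $K^0{}'=K^0$, $K^1{}'=0$, $K^2{}'=-K^2$, $K^3{}'=-2K^3$ are right), and then normalize $(A,B,C,D)$ by residual coordinate changes. The computation you defer is exactly the paper's one trick: the ``appropriate shift'' is $y_{\mathrm{new}}=y_{\mathrm{old}}+\alpha e^{x_{\mathrm{old}}}$, which is the unique family of shifts fixing both $X=(0,1)$ and $Y=(1,y)$ (a general $y\mapsto y+\varphi(x)$ destroys the normal form of $Y$), and it sends $(A,B,C,D)$ to $\bigl(A-\alpha+B\alpha+C\alpha^2+D\alpha^3,\ B+2C\alpha+3D\alpha^2,\ C+3D\alpha,\ D\bigr)$; cases 1 and 2 then follow by solving $C+3D\alpha=0$ (when $D\ne0$) and $B+2C\alpha=0$ (when $D=0$, $C\ne0$).

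The one step in your write-up that is actually wrong as stated is case 3. When $D=C=0$ the Liouville invariants are not ``a multiple of $A$ (and $B$)'': substituting into~\eqref{eq: Liouvilleinvariants} gives $L_1=(-C+9AD-BC)e^{-x}$ and $L_2=(6D(B-2)-2C^2)e^{-2x}$, both of which vanish \emph{identically} once $C=D=0$, regardless of $A$ and $B$. Moreover $B$ cannot be ``absorbed by a reparametrization'': under the shift above and under the affine changes $\bar x=x+s$, $\bar y=ry$, the coefficient $B$ is invariant, so no transformation preserving the $(X,Y)$-normal form kills it. The correct argument (the paper's) is that $C=D=0$ forces $\lambda\equiv0$, hence the connection is flat by the Liouville--Tresse--Cartan criterion, hence there exist (new, not $(X,Y)$-adapted) coordinates in which the equation is $y''=0$, i.e.\ $A=B=C=D=0$; the fields $(0,1)$ and $(1,y)$ are automatically symmetries of $y''=0$, so the conclusion of the Lemma still holds in those coordinates. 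With that repair, and the transformation law above supplied, your argument is complete.
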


The vector fields $(0,1)$ and $(1,y)$ are always infinitesimal
symmetries of the projective connection~\eqref{eq2}. Therefore,
$\dim\eup(\eqref{eq2})\ge 2$. For certain
values of $A$, $B$, $C$, and~$D$, one can have $\dim\eup(\eqref{eq2})>2$. 
The following lemma describes all such values of $A$, $B$, $C$, and~$D$.

\begin{Lemma} \label{trivial1} 
The following statements hold
\begin{enumerate}

\item The projective connection 
$ y''(x)=A e^{x} + B y'(x)+ De^{-2x} (y'(x))^3 $  with $D\ne 0$
admits a infinitesimal symmetry that is not a
linear combination of the vector fields $(0,1)$ and $(1,y)$ 
if and only if $A=0$ and $B\in \{ 1/2, 2\}$.
Moreover, if $A=0$ and $B=2$, then the projective connection is flat. 
If $A=0$ and $B=1/2$, every infinitesimal symmetry 
is a linear combination of the vector fields $(0,1)$, $(1,y)$, and $(y, y^2/2)$.

\item For $C\ne0$ the algebra of infinitesimal symmetries 
of the projective connection 
$y''(x)=A e^{x} + C e^{-x}(y'(x))^2 $ 
is spanned by the vector fields $(0,1)$ and $(1,y)$.
\end{enumerate}
\end{Lemma}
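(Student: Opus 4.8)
Here is how I would prove Lemma~\ref{trivial1}.

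\medskip

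The plan is to prove both statements together. The fields $(0,1)$ and $(1,y)$ are visibly infinitesimal symmetries, so $\dim\eup\ge2$ always, and by Lie's classification (Section~\ref{02}) the only way to have $\dim\eup>2$ is $\dim\eup=3$, with $\eup\cong\eusl(2,\bbR)$, or $\dim\eup=8$, which (Sections~\ref{01} and~\ref{03}) happens exactly when the connection is projectively flat, i.e., when its Liouville invariant $\lambda$ vanishes. So the first step is to compute $\lambda$: feeding the coefficients $K^0=Ae^x$, $K^1=B$, $K^2=0$, $K^3=De^{-2x}$ (for part~1) and $K^0=Ae^x$, $K^1=0$, $K^2=Ce^{-x}$, $K^3=0$ (for part~2) into~\eqref{eq: Liouvilleinvariants} gives, respectively, $L_1=9AD\,e^{-x}$, $L_2=6D(B-2)e^{-2x}$ and $L_1=-C\,e^{-x}$, $L_2=-2C^2e^{-2x}$. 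Since $D\ne0$ (respectively $C\ne0$), the connection of part~1 is flat precisely when $A=0$ and $B=2$, while that of part~2 is never flat; in the flat case $\dim\eup=8$. This already proves the ``moreover'' clause of part~1 and reduces everything else to deciding, in the non-flat case, whether a third symmetry exists.

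Assume then $\dim\eup=3$, so $\eup\cong\eusl(2,\bbR)$ and $X=(0,1)$, $Y=(1,y)$, which satisfy $[X,Y]=X$, both lie in $\eup$. The trace of $\operatorname{ad}(Y)$ on $\eusl(2,\bbR)$ vanishes; but $\operatorname{ad}(Y)$ has eigenvalue $-1$ on $X$ and $0$ on $Y$, so its third eigenvalue is $1$. Hence $\operatorname{ad}(Y)$ is diagonalisable and there is $W\in\eup$, $W\ne0$, with $[Y,W]=W$; since $X$, $Y$, $W$ are eigenvectors for the three distinct eigenvalues, $W$ is not a linear combination of $X$ and $Y$. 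Solving the equation $[Y,W]=W$ --- whose characteristics are the level sets of $v:=ye^{-x}$, the orbits of $Y$ --- shows that $W=\bigl(e^{x}\alpha(v),\,e^{2x}\beta(v)\bigr)$ for some functions $\alpha,\beta$ of one variable.

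Next I would substitute this $W$ into the symmetry system~\eqref{ptr}, using as a shortcut the identity $\mathcal{L}_W\lambda=0$ valid for every symmetry (the Remark in Section~\ref{03}): because of the explicit exponential shape of $\lambda$, this becomes a pair of \emph{first order} linear ODEs for $\alpha,\beta$ that already cut the search down drastically. For part~2, the fourth line of~\eqref{ptr} moreover gives $\alpha''+C\alpha'=0$, and together with $\mathcal{L}_W\lambda=0$ and one further line of~\eqref{ptr} this forces $\alpha=\beta=0$, so $\dim\eup=2$. For part~1 with $A=0$ and $B\ne2$, $\mathcal{L}_W\lambda=0$ forces $\beta=c\,v^2$ and $\alpha=2cv+d/v$; the fourth line of~\eqref{ptr} then forces $d=0$ and the third line forces $B=\tfrac12$ unless $c=0$, so a nonzero $W$ appears exactly when $B=\tfrac12$, and then $W$ is proportional to $(y,\tfrac12y^2)$. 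For part~1 with $A\ne0$ the connection is not flat, and the (now coupled) system obtained from $\mathcal{L}_W\lambda=0$ and~\eqref{ptr} again forces $W=0$. Finally, for the converse direction of part~1, a one-line substitution confirms that $(y,\tfrac12y^2)$ solves~\eqref{ptr} when $A=0$, $B=\tfrac12$; the connection then being non-flat gives $\dim\eup=3$, so $\eup=\operatorname{span}\{(0,1),(1,y),(y,\tfrac12y^2)\}$.

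The step I expect to give the most trouble is the analysis of the coupled overdetermined system in the case $A\ne0$ of part~1 (and the analogous system in part~2): one must substitute $W=(e^x\alpha(v),e^{2x}\beta(v))$ and the first order consequences of $\mathcal{L}_W\lambda=0$ into~\eqref{ptr} and show that only $W=0$ survives, taking care to keep the new value $B=\tfrac12$ separate from the flat value $B=2$ and to discard the non-smooth parasitic term $d/v$. The computation is elementary, and the fact that $\mathcal{L}_W\lambda=0$ together with the fourth line of~\eqref{ptr} already carry most of the burden keeps the bookkeeping manageable.
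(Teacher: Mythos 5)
Your proposal is correct in its logical architecture and the computations you carry out check against the paper, but at the key step it takes a different and computationally heavier route. Both proofs dispose of the $8$-dimensional case by computing the Liouville invariants (your values $L_1=9ADe^{-x}$, $L_2=6D(B-2)e^{-2x}$, resp.\ $L_1=-Ce^{-x}$, $L_2=-2C^2e^{-2x}$, agree with the paper's relations $6D(B-2)-2C^2=0$ and $C+9AD-BC=0$), and both reduce the $3$-dimensional case to deciding whether a third generator of $\eusl(2,\bbR)$ can be adjoined to $(0,1)$ and $(1,y)$. The difference is the normalization of that third generator. You use only the eigenvector relation $[Y,W]=W$ (your $\operatorname{ad}(Y)$-diagonalizability argument is correct), which leaves the two-function family $W=(e^x\alpha(v),\,e^{2x}\beta(v))$ with $v=ye^{-x}$, so you must then integrate ODEs extracted from $\mathcal{L}_W\lambda=0$ and \eqref{ptr}; I verified your $A=0$, $B\ne2$ branch ($\beta=cv^2$, $\alpha=2cv+d/v$, the fourth line of \eqref{ptr} kills $d$, the third gives $2c(1-2B)=0$), but the branches you yourself flag as troublesome ($A\ne0$ in part~1, and part~2) are left as assertions. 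The paper instead imposes the full structure relations $[X,Z]=Y$ and $[Y,Z]=Z$, which pin the candidate down to the two-constant family $Z=(y+C_1e^x,\ y^2/2+C_2e^{2x})$; substituting this into \eqref{ptr} turns your hard branches into a short list of algebraic identities in $C_1,C_2,A,B,C,D$ that are visibly incompatible (for instance $-Ce^{-x}=0$ in part~2). If you add the relation $[X,W]=\mu Y$ with $\mu\ne0$ --- forced in $\eusl(2,\bbR)$, since otherwise $\operatorname{span}(X,W)$ would be an abelian ideal --- you recover the paper's normalization and your remaining cases become routine.
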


\noindent {\em Proof of Lemmas~\ref{trivial},\, \ref{trivial1}.}
These lemmas follow from the results of Beltrami, Lie, Cartan,  and Tresse  
that we recalled in Section~\ref{0} and are not new, see 
for example \cite{kowalski,romanovski}. We give
their proofs to make this article self-contained.

We assume that the projective connection is not flat. By
Lemma~\ref{lem3}, in a certain coordinate system $u=(1,y)$ and
$v=(0,1)$ are projective vector fields of this connection.

It is easy to construct the flows of these vector fields.
Indeed, the flow of $v$ is $\Psi_\tau(x,y)=(x,y+\tau)$. 
The flow of $u$ is $\Phi_\tau(x,y)=(x+\tau,e^\tau y)$.
Since the flow of the vector field $u=(0,1)$ preserves the
geodesics, for every solution $y(x)$ and for every $\tau$ the
function $y(x)+\tau$ is also a solution of the equation
\eqref{eq1}. Thus, the coefficients $K^0,K^1,K^2,K^3$ are
independent of $y$.

Similarly, since the flow of the vector field $u=(1,y)$
preserves the geodesics, for every solution $y(x)$ and for every
$\tau$ the function $e^\tau \, y(x+\tau)$ is a solution of the
equation~\eqref{eq1}. Hence, 
$$ e^\tau y''(x+\tau)=K^0 + e^\tau
K^1 y'(x+\tau)+ e^{2\tau} K^2 \ (y'(x+\tau))^2+ e^{3\tau}K^3
(y'(x+\tau))^3.
$$
Thus, the projective connection has the form~\eqref{eq2}.

Now let us show that we can change coordinates so that the
constants $A,B,C,D$ will satisfy conditions 1, 2, or 3. 
Consider the following new coordinate system: $x_\mathrm{new}:=x_\mathrm{old}$,
$y_\mathrm{new}:=y_\mathrm{old}+\alpha e^{x_\mathrm{old}}$. 
In these new coordinates, ~\eqref{eq2} becomes
 \begin{eqnarray*}
y''(x) &=& A_\mathrm{new} e^{x} +B_\mathrm{new}y'(x)+C_\mathrm{new}
e^{-x}(y'(x))^2+D_\mathrm{new} e^{-2x}(y'(x))^3
\\ &=& \left(A-\alpha\,+B\alpha\,+C{\alpha}^{2}+
D {\alpha}^{3} \right)e^{x} +\left( 3\, D
{\alpha}^{2}+2\,C\alpha+B \right) y' \left( x \right)\\
 && + \left( 3\,
D \alpha+C \right){e^{-x}} \left(y' \left( x \right) \right)^{2} +
 D{e^{-2\,x}} \left(y'(x) \right) ^{3} .
\end{eqnarray*}
This new equation has the same form as~\eqref{eq2} and therefore
 the vector fields $(0,1)$, $(1,y)$ are infinitesimal
symmetries of this equation (which is not surprising because the
coordinate change preserves the vector field $(0,1)$ and sends
the vector field $(1,y)$ to a linear combination of $(1,y)$ and
$(0,1)$).

We see that if $D\ne0$, then by the appropriate choice of
$\alpha$ we can make $C_\mathrm{new}=0$ so that the constants
$A_\mathrm{new},B_\mathrm{new}, C_\mathrm{new},D_\mathrm{new}$ will satisfy the first case
from Lemma~\ref{trivial}. Similarly, if $D= 0$ and $C\ne0$, then
by the appropriate choice of $\alpha$ we can make $B_\mathrm{new}=0$ so
that the constants $A_\mathrm{new},B_\mathrm{new}, C_\mathrm{new},D_\mathrm{new}$ will satisfy
the second case of Lemma~\ref{trivial}. If $D=C=0$, then both
Cartan invariants (recalled in Section~\ref{03}) vanish and
therefore, in some coordinate system, $A=B=C=D=0$. 
Lemma~\ref{trivial} is proved. \qed 

Let us now prove Lemma~\ref{trivial1}. Suppose first that the
algebra $\eup(\eqref{eq2})$ is $8$-dimensional. Then, by
the result of Lie, Liouville, Tresse  and Cartan recalled in Section~\ref{03}, 
we have~$L_1=L_2=0$. Substituting the coefficients of the
connection in the formulae for~$L_2$ and~$L_1$, we obtain
\begin{eqnarray*} 
6D(B-2)-2 C^2&=&0\\ 
C+9AD-BC&=&0.
\end{eqnarray*}
Finally, if $B=D=0$ then the second equation implies $C=0$. If
$D\ne0 $ and $C=0$ then the equations imply $A=0$, $B=2$.
Lemma~\ref{trivial1} is proved under the assumption that the
algebra of infinitesimal symmetries is $8$-dimensional.

Now suppose the projective connection~\eqref{eq2} has a
three-dimensional algebra of infinitesimal symmetries.  Then, it is isomorphic to $\eusl(2,\mathbb{R})$, see Section~\ref{02}, and
therefore is generated by three vector fields $X,Y,Z$ satisfying
\[ 
[X,Y]=X\,,\quad [X,Z]=Y\,,\quad [Y,Z]=Z\,.
\]
Without loss of generality, in view of Lemma~\ref{lem3}, 
we can assume that $X=(0,1)$ and $Y=(1,y)$.
Indeed, the vector fields $(0,1)$ and $(1,y)$ satisfy $[X,Y]=X$
and therefore form a Borel subalgebra in the algebra $\eusl(2,\mathbb{R})$ of
infinitesimal symmetries, and all Borel subalgebras of $\eusl(2,\mathbb{R})$
are isomorphic. Then, the vector field $Z$ must satisfy the
conditions
\[ 
[X,Z]=Y\,,\quad [Y,Z]=Z\,.
\]
These conditions are equivalent to the following system of
partial differential equations on the components $Z^1, Z^2$ 
of the vector field $Z$.
$$
\left.
\renewcommand{\arraystretch}{1.5}
\begin{array}{r}
\frac{\p Z^1}{\p y} - 1 = 0\\
\frac{\p Z^2}{\p y} - y =0\\
\frac{\p Z^1}{\p x} + y\frac{\p Z^1}{\p y}
- Z^1 = 0\\
\frac{\p Z^2}{\p x} + y\frac{\p Z^2}{\p y}
- 2Z^2 = 0
\end{array}
\right\}. 
$$ 
Solving the system we obtain $ Z=(Z^1, Z^2)
= \left(y+C_1e^x, \frac{y^2}{2} + C_2e^{2x}\right) $, where $C_1, C_2$
are constants. But if such a vector $Z$ is an infinitesimal
symmetry, then the equations \eqref{ptr} corresponding to the
connection from the second statement of Lemma~\ref{trivial1} imply
the equality 
$$-Ce^{-x}=C_1 C +1= (3A+4CC_2+C_1)e^x=
(4C_2-3C_1A)e^{2x}=0, 
$$ 
which is incompatible. Thus, the second
statement of Lemma~\ref{trivial1} is proved. Similarly, the
equations~\eqref{ptr} corresponding to the connections from the
second statement of Lemma~\ref{trivial1} imply the equality
\[
3C_1De^{-x}= (1-2B-6DC_2)= - (C_1B+C_1+3A)e^x =
(4C_2-3C_1A-2BC_2)e^{2x}=0.
\]
Thus, $ \left\{C_1=0,C_2=-\frac{1}{2D},A=0,B=2\right\} \,,
\text{or} \ \left\{C_1=0,C_2=0,A=0,B=\frac{1}{2}\right\} $. As we
explained above, the first case corresponds to a connection with
8-dimensional space of infinitesimal symmetries.
Lemma~\ref{trivial1} is proved. \qed 

\label{connections}

\subsection{When does a given projective connection come from a metric?}
\label{miracle}

Consider a projective connection
\begin{equation}\label{pc}
y''(x) = K^0(x,y)+K^1(x,y) y'(x)
     +K^2(x,y)\bigl(y'(x)\bigr)^2+K^3(x,y)\bigl(y'(x)\bigr)^3.
\end{equation}
We ask when there exists a metric with this projective connection and,
if any exist, how to find them. Lemma~\ref{1} below, 
due to R. Liouville~\cite[Chapter~III, \S XI]{liouville},
gives a useful tool to answer this question.

Fix a coordinate system $x,y$ (which, for notational ease, 
we shall sometimes refer to as~$(x^1, x^2)$).
Given a metric~$g=E\,dx^2+2F\,dx\,dy+G\,dy^2$, 
construct the symmetric nondegenerate matrix
\begin{equation} \label{a}
a = \left(\begin{array}{cc}a_{11}&a_{12}\\a_{12}&a_{22}\end{array}\right)
:= \det(g)^{-2/3} \, g 
 = \frac1{(EG-F^2)^{2/3}}\left(\begin{array}{cc}E&F\\F&G\end{array}\right).
\end{equation}

\begin{Lemma}[\cite{liouville}]
\label{1}
The projective connection of the metric $g$ is~\eqref{pc} 
if and only if the entries of the matrix~$a$ 
satisfy the \emph{linear} PDE system
\begin{equation} 
\label{lin1} 
\left. 
\renewcommand{\arraystretch}{1.5}
\begin{array}{rcc}
{a_{11}}_x-\tfrac{2}{3}\,K^1\,a_{11} +2\,K^0\,a_{12}&=&0\\
{a_{11}}_y+2\,{a_{12}}_x  
-\tfrac{4}{3}\,K^2\,a_{11}+\tfrac{2}{3}\,K^1\,a_{12}+2\,K^0\,a_{22}&=&0\\
2\,{a_{12}}_y+{a_{22}}_x 
-2\,K^3\,a_{11}-\tfrac{2}{3}\,K^2\,a_{12}+\tfrac{4}{3}\,K^1\,a_{22}&=&0\\
{a_{22}}_y-2\,K^3\,a_{12}+\tfrac{2}{3}\,K^2\,a_{22} &=&0
\end{array} \right\}
\end{equation}
\end{Lemma}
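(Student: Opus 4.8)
The plan is to prove both directions of Lemma~\ref{1} by a direct computation, using only the formula~\eqref{con} for the projective connection of a Levi-Civita connection together with the two elementary identities $\partial_k g_{ij}=\Gamma^l_{ki}g_{lj}+\Gamma^l_{kj}g_{li}$ (metricity) and $\partial_k\log\det(g)=2\,\Gamma^l_{lk}$ (Jacobi's formula). First I would observe that, by~\eqref{con}, the projective connection of~$g$ equals~\eqref{pc} precisely when the combinations $\hat K^0=-\Gamma^2_{11}$, $\hat K^1=\Gamma^1_{11}-2\Gamma^2_{12}$, $\hat K^2=2\Gamma^1_{12}-\Gamma^2_{22}$, $\hat K^3=\Gamma^1_{22}$ of the Christoffel symbols of~$g$ coincide with the prescribed functions $K^0,\dots,K^3$. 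So the Lemma is equivalent to: the matrix $a=\det(g)^{-2/3}g$ of~\eqref{a} satisfies~\eqref{lin1} with the prescribed~$K^i$ if and only if $K^i=\hat K^i$ for all~$i$.

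For the ``only if'' direction, differentiating $a_{ij}=\det(g)^{-2/3}g_{ij}$ and using the two identities above gives
\[\partial_k a_{ij}=\det(g)^{-2/3}\bigl(\Gamma^l_{ki}g_{lj}+\Gamma^l_{kj}g_{li}-\tfrac{4}{3}\,g_{ij}\,\Gamma^l_{lk}\bigr).\]
Substituting this, together with $a_{ij}=\det(g)^{-2/3}g_{ij}$ and $K^i=\hat K^i$, into each of the four lines of~\eqref{lin1} and factoring out $\det(g)^{-2/3}$, I expect every remaining term (a polynomial in the $g_{ij}$ and $\Gamma^l_{jk}$) to cancel, so that~\eqref{lin1} becomes an identity once $K^i=\hat K^i$. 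The role of the specific exponent $-2/3$ in~\eqref{a} is precisely to \emph{linearize} the (a~priori nonlinear) equation ``the Christoffel symbols of~$g$ produce~\eqref{pc}'': it is the unique weight for which, after this substitution, the terms involving $\partial_k\det(g)$ drop out, leaving the linear system~\eqref{lin1}. In particular, if the projective connection of~$g$ is~\eqref{pc}, then $K^i=\hat K^i$, and hence~\eqref{lin1} holds.

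For the ``if'' direction I would argue by linear algebra. Suppose $a=\det(g)^{-2/3}g$ satisfies~\eqref{lin1} with the prescribed~$K^i$. Subtracting from this the identity just established --- namely~\eqref{lin1} written with the~$\hat K^i$ --- all derivatives of~$a$ cancel, and one is left with four \emph{algebraic}, homogeneous, linear equations for the differences $u^i:=K^i-\hat K^i$, whose coefficients depend linearly on the entries $a_{11},a_{12},a_{22}$. A short computation shows that the determinant of this $4\times 4$ system is a nonzero numerical multiple of $(\det a)^2=\det(g)^{-2/3}$, which never vanishes since $g$ is nondegenerate. Hence $u^i=0$ for every~$i$, i.e.\ $K^i=\hat K^i$, so the projective connection of~$g$ is~\eqref{pc}.

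The only genuine work is the two verifications above --- the identical vanishing of~\eqref{lin1} under $K^i=\hat K^i$, and the evaluation of the $4\times 4$ determinant (which will factor, up to a constant, as $(\det a)^2$). Neither needs any idea beyond the formula~\eqref{con}; the one place to be careful rather than clever is the bookkeeping: matching the rational coefficients $\tfrac{2}{3},\tfrac{4}{3},2$ of~\eqref{lin1} against the $-2/3$ weight, and keeping the index contractions straight.
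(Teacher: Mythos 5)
Your proposal is correct, and both computations you defer do go through: the system \eqref{lin1} becomes an identity when the $K^i$ are replaced by the combinations $\hat K^i$ built from the Christoffel symbols of $g$ (this follows exactly from the two identities you cite and the weight $-2/3$), and the determinant of your $4\times4$ homogeneous system in the $u^i$ evaluates to $\tfrac{16}{9}\,(\det a)^2\neq 0$, as you predict. Your route is, however, genuinely different from the paper's. The paper never singles out the Levi-Civita connection of $g$: it first parametrizes \emph{all} symmetric affine connections whose associated projective connection is \eqref{pc} by two free functions $p_1,p_2$ (the part of the connection invisible to the projective class), writes the metricity condition $g_{ij,k}=0$ as six first-order PDE in $E,F,G,p_1,p_2$, eliminates $p_1,p_2$ using two of these equations, and only then substitutes $g=a/\det(a)^2$ to arrive at \eqref{lin1}; both directions of the equivalence are carried by the reversibility of this elimination. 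You instead work directly with the Christoffel symbols of $g$ itself, verify that \eqref{lin1} holds identically for $K^i=\hat K^i$, and close the converse with a linear-algebra/nondegeneracy argument. Your version makes the ``if'' direction more transparent --- the nondegeneracy of $a$ is exactly what is needed and exactly what is available --- and avoids the auxiliary unknowns altogether; the paper's version has the advantage of exhibiting the two-parameter family of compatible affine connections explicitly, which ties in with its subsequent remark motivating the substitution \eqref{a} through the equation for quadratic integrals. Either way the real work is the same bookkeeping of the rational coefficients against the $-2/3$ weight, which you have set up correctly.
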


\begin{Rem} 
Before proving the lemma, let us explain how to define the system~\eqref{lin1}
more conceptually. Consider the jet space $J_1^1$ 
(since we are working locally we can think of~$J_1^1$ as $\bbR^3$ with
coordinates $(x,y, y_x)$). Consider the function
$$
F:J^1\to \bbR, \qquad  
F(x,y,y_x):=a_{11} + 2 a_{12} y_x + a_{22} {y_x}^2 ,
$$
(i.e., the function~$F$ is the square of the length of the vector 
$(1,y_x)$ in the quadratic form corresponding to the matrix $a$),
the function
$$
\alpha(x,y, y_x):=\frac{\p y_{xx}}{\p y_x} 
:= K^1 + 2\, K^2 (y_x) + 3 \, K^3 (y_x)^2, 
$$ 
and the `total derivative'~$D_x$ restricted to~\eqref{eq1} 
$$ 
D_x := \frac{\p }{\p x} + y_x \frac{\p}{\p y} 
    + (K^0+K^1y_x+K^2(y_x)^2+K^3(y_x)^3)\frac{\p}{\p y_x}.
$$

Then the system \eqref{lin1} is equivalent to the equation 
$D_x(F) =\frac{2}{3} F \alpha $. More precisely, the equation 
$D_x(F)- \frac{2}{3} F \alpha=0$ is polynomial of third degree
in~$y_x$ and the equations from the system~\eqref{lin1} 
are coefficients of this polynomial.
\end{Rem}

\begin{Rem} 
The proof that we will give below is essentially Liouville's proof,
just in different notation. 

At first glance, the change of variables~\eqref{a} 
that renders 
the obvious equations on~$E$, $F$, and~$G$ 
linear in the unknowns~$a_{ij}$ seems miraculous.%
\footnote{Indeed, Liouville himself seems to have regarded 
this linearity as a remarkably lucky circumstance.} 
However, as Liouville himself noted, 
the occurrence of the matrix $a$ via~\eqref{a}
in Lemma~\ref{1} can be motivated by considering its link 
with the linear equation for quadratic first integrals, 
as we will now explain.

If the projective connection~\eqref{eq1} 
were known to be that of a metric 
associated to a quadratic function $h:TM\to \bbR$, 
then, by Theorem~\ref{integral}, 
a metric~$g$ would have the same geodesics as~$h$
(and hence have~\eqref{eq1} as its projective connection)
if and only if it were to satisfy
\begin{equation} 
\label{123}
\left\{\left({\tfrac{\det(h)}{\det(g)}}\right)^{\tfrac{2}{3}}g,\,h\right\}_h=0,
\end{equation}
where $\{\,,\}_h$ is the Poisson bracket 
transplanted from~$T^*M$ to~$TM$ by the bundle
isomorphism $\flat_h:TM\to T^*M$.
(Note that, although $\det(g)$ depends on a choice of coordinates, 
the ratio $\frac{\det(h)}{\det(g)}$ does not, so~\eqref{123} is
actually a coordinate-independent equation.)

The equation~\eqref{123} is visibly linear in $a=g/\det(g)^{2/3}$, 
which motivates expressing the projective geodesic condition for~$g$ 
in terms of~$a$. 
\end{Rem}

\begin{proof} 
We will first derive a system of PDE on the components of the metric~$g$ 
whose solvability corresponds to the existence of metric with the
projective connection~\eqref{pc} and then show that this system
is equivalent to the system~\eqref{lin1}. 

First of all, it is easy to obtain all symmetric affine
connections whose projective connection is~\eqref{pc}. Indeed, 
by the definition of the projective connection corresponding 
to an affine connection (see Section~\ref{01}), 
the components of the symmetric affine connection 
should satisfy the system of four linear equations 
$$
K^0 = -\Gamma^2_{11},\qquad 
K^1 =  \Gamma^1_{11}-2\Gamma^2_{12},\qquad
K^2 = -\Gamma^2_{22}+2\Gamma^1_{12},\qquad  
K^3 =  \Gamma^1_{22}.
$$ 
Solving this system for the~$\Gamma^i_{jk}$ gives
$$
\renewcommand{\arraystretch}{1.5}
\begin{array}{lll}
\Gamma^2_{11}= -K^0, & 
\Gamma^1_{11}= \phantom{-}K^1+2p_1, &
\Gamma^1_{21}= \Gamma^1_{12}=p_2,\\ 
\Gamma^1_{22}= \phantom{-}K^3,&
\Gamma^2_{22}= -K^2+2p_2, &
\Gamma^2_{21}= \Gamma^2_{12}=p_1,
\end{array}
$$ 
where~$p_1$ and~$p_2$ are functions on~$D^2$.

By definition, $\Gamma_{jk}^i$ is the Levi-Civita connection 
of the unknown metric $g=g_{ij}$, if and only if the covariant
derivative $g_{ij,k}$ in this connection vanishes. The standard
formula 
$$ g_{ij,k}=\frac{\p g_{ij}}{\p x_k}-
\sum_{\alpha=1}^2\left(\Gamma^\alpha_{jk}
g_{i\alpha}+\Gamma^\alpha_{ik} g_{\alpha j}\right) 
$$ 
then gives us the following six first order PDE on five unknown functions, 
which are the components~$E$, $F$, and~$G$ of the metric 
and the functions $p_1$, $p_2$.
\begin{equation} \label{lin3} 
\renewcommand{\arraystretch}{1.5}
\left.
\begin{array}{rcc}
\frac{\p E}{\p x} -2\,K^1\,E +2\,K^0\,F -4\,E\,p_1 &=&0 \\
\frac{\p E}{\p y} -2\,p_2\,E -2\,p_1\,F &=&0 \\
\frac{\p F}{\p y} -K^3\,E +K^2\,F -3\,p_2\,F-p_1\,G &=&0 \\
\frac{\p F}{\p x} -K^1\,F +K^0\,G -3\,p_1\,F-p_2\,E &=&0\\ 
\frac{\p G}{\p x} -2\,p_2\,F -2\,p_1\,G &=& 0\\
\frac{\p G}{\p y} -2\,K^3\,F +2\,K^2\,G -4\,G\,p_2 &=&0
\end{array}\right\}\end{equation}

Thus, the metric $g=
\left(\begin{array}{cc}E&F\\F&G\end{array}\right)$  
has projective connection \eqref{pc} if and only if 
there exist functions $p_1$ $p_2$ such that $E$, $F$, $G$,
$p_1$, and $p_2$ satisfy the system \eqref{lin3}.

Assuming that~$EG-F^2\not=0$, 
solving the second and fifth equation of \eqref{lin3} for $p_1$ and $p_2$, 
and then substituting the result into the remaining equations, 
we obtain a system of four first order PDE 
for the three functions~$E$, $F$, and~$G$, 
whose solvability is equivalent to the existence of a metric 
with the projective connection~\eqref{pc}.

Now making the substitution 
$$
\left(\begin{array}{cc}E&F\\F&G\end{array}\right) 
=\frac{a}{\det(a)^2} 
=
\left(
{\renewcommand{\arraystretch}{1.5}
\begin{array}{cc} 
\dfrac{a_{11}}{(a_{11}a_{22}-a_{12}^2)^2}&  
\dfrac{a_{12}}{(a_{11}a_{22}-a_{12}^2)^2}\\
\dfrac{a_{12}}{(a_{11}a_{22}-a_{12}^2)^2}&
\dfrac{a_{22}\strut}{(a_{11}a_{22}-a_{12}^2)^2}
\end{array}}
\right),
$$
(which is equivalent to~\eqref{a}) into these equations yields,
after some computation, the system~\eqref{lin1}. Lemma~\ref{1} is proved.
\end{proof}

\subsection{Reducing the system~\eqref{lin1} to a system of ODE
and solving it} 
\label{metric}

In this section we assume that our metric~$g$ admits \emph{precisely} 
two linearly independent projective vector fields. Then, by
Lemma~\ref{trivial}, in some  coordinate system near this point the
projective connection of this metric is given as 
\begin{equation}
\label{-6} y''(x)=A e^{x} + B y'(x) +  C e^{-x} (y'(x))^2 +D
e^{-2x} (y'(x))^3.
\end{equation}

By Lemma~\ref{1}, the components $a_{11}$, $a_{12}$, $a_{22}$
of the matrix $a$ constructed for the metric $g$ 
by the formula~\eqref{a} satisfy the system \eqref{lin1}. 
Meanwhile, the coefficients of~\eqref{lin1}, 
considered as a linear first order system for~$a$,
do not depend on $y$ because they are either constants
or constant multiples of the~$K^i$, 
which are independent of $y$ for the projective connection~\eqref{-6}. 

Thus, if $a = (a_{ij})$  is a solution of~\eqref{lin1}, 
then~$\frac{\p a}{\p y}$, $\frac{\p^2a}{\p y^2}$,and~$\frac{\p^3a}{\p y^3}$ 
are also solutions.  However, 
by Corollary~\ref{-7}, our assumption that the space of projective 
vector fields is precisely two-dimensional implies that the space of metrics having \eqref{-6} as projective connection has dimension at most three. 
Thus, there exist constants $\lambda_0,\lambda_1,\lambda_2,\lambda_3$ 
(with at least one of $\lambda_1,\lambda_2,\lambda_3$ being nonzero) 
such that
$$
 \lambda_0\,a
+\lambda_1\,\frac{\p a}{\p y}
+\lambda_2\,\frac{\p^2a}{\p y^2}
+\lambda_3\,\frac{\p^3a}{\p y^3}=0.
$$
Hence, the functions $a_{11}$, $a_{12}$, and $a_{22}$ all satisfy some
linear ordinary differential equation of at most third order
with constant coefficients. 

Depending on the multiplicity  of the roots of the characteristic polynomial 
of this equation, the functions $a_{11}$, $a_{12}$, and $a_{22}$ 
are given by one of the following formulae.
\begin{itemize}
\item[]{\bf Case  1:} 
$a_{11}=c_{01} \, e^{\alpha_1 y} + c_{02}\, e^{\alpha_2 y} 
               +c_{03} \, e^{\alpha_3 y} \,, \quad 
a_{12}=2c_{11} \, e^{\alpha_1y} + 2c_{12}\, e^{\alpha_2 y} 
              + 2c_{13} \, e^{\alpha_3 y} \,,
\quad a_{22}=c_{21} \, e^{\alpha_1 y} + c_{22} \, e^{\alpha_2 y} +
c_{23} \, e^{\alpha_3 y}$,  
where $\alpha_1\in \bbR$, $\alpha_2,\alpha_3\in\mathbb{C}$ 
are mutually different constants and $c_{ij}$ are functions of $x$.

\item[]{\bf Case 2: } $ a_{11}=c_{01} \, e^{\alpha_1 y} + c_{02}
\, e^{\alpha_2 y} + c_{03}y \, e^{\alpha_2 y} \,, \quad
a_{12}=2c_{11} \, e^{\alpha_1 y} + 2c_{12} \, e^{\alpha_2 y} +
2c_{13} y\, e^{\alpha_2 y} \,, \quad a_{22}=c_{21} \, e^{\alpha_1
y} + c_{22} \, e^{\alpha_2 y} + c_{23} y\, e^{\alpha_2 y}$, where
$\alpha_1\in \bbR, \alpha_2\in \mathbb{C}$ are mutually
different constants and $c_{ij}$ are functions of $x$.

\item[]{\bf Case 3:} $ a_{11}=c_{01} \, e^{\alpha y} + c_{02} y \,
e^{\alpha y} + c_{03} y^2 \, e^{\alpha y} \,, \quad a_{12}=2c_{11}
\, e^{\alpha y} + 2c_{12}y  \, e^{\alpha y} + 2c_{13} y^2\,
e^{\alpha y} \,, \quad a_{22}=c_{21} \, e^{\alpha y} + c_{22} y \,
e^{\alpha y} + c_{23} y\, e^{\alpha y}$, where $\alpha\in\bbR$ 
is a constant and $c_{ij}$ are functions of $x$.

\end{itemize}

In Cases 1 and 2, we allow also complex-conjugated $\alpha_i$.

Substituting the above ansatz for $a_{ij}$ in the equations \eqref{lin1}
and using that the functions $y^je^{\alpha_i y}$ are linearly independent 
for different $j$ and $\alpha_i$, we see that the system \eqref{lin1} is equivalent to the following system of ODE on the functions $c_{ij}$.
The system contains 9 ordinary differential equations of the form
\begin{equation}\label{eqM} 
\frac{d}{d x}c = M c, 
\end{equation}
where $c$ is the column with entries 
$(c_{01}, c_{02}, c_{03},c_{11}, c_{12}, c_{13},c_{21}, c_{22}, c_{23})$ 
and $M$ is the following $9$-by-$9$ matrix,\\
{
\tiny 
\centerline{$\left(
\begin{array}{ccccccccc} 
\frac{2}{3}B & 0 & 0 & -Ae^{x} & 0 & 0 & 0 & 0 & 0 \\ 
0 & \frac{2}{3}B & 0 & 0 & -Ae^{x} & 0 & 0 & 0 & 0\\ 
0 & 0 & \frac{2}{3}B & 0 & 0 & -Ae^{x} & 0 & 0 & 0 \\
\frac{4}{3}Ce^{-x}-\alpha _{1} & k_1 & 0 & -\frac{1}{3}B & 0 & 0 &
-2Ae^{x} & 0 & 0 \\ 
0 & \frac{4}{3}Ce^{-x}-\alpha _{2} & k_2& 0 &
-\frac{1}{3}B & 0 & 0 & -2Ae^{x} & 0 \\
0 & 0 &
\frac{4}{3}Ce^{-x}-\alpha _{3} & 0 & 0 & -\frac{1}{3}B & 0 & 0 &
-2Ae^{x} \\
2De^{-2x} & 0 & 0 & \frac{1}{3}Ce^{-x}-\alpha _{1} &
k_1& 0 & -\frac{4}{3}B & 0 & 0 \\
0&2De^{-2x}&0&0&\frac{1}{3}Ce^{-x}-\alpha_{2}&k_2 & 0 & -\frac{4}{3}B & 0 \\
0&0&2De^{-2x}&0&0&\frac{1}{3}Ce^{-x}-\alpha_{3}&0&0& -\frac{4}{3}B
\end{array}
\right)$}
}
and three linear equations on $c_{ij}$, which can be written as
{\begin{equation}\label{3eq}
 {  
\begin{pmatrix} 
\alpha_1 + \frac{2}{3}C e^{-x} & k_1 & 0 \\ 0 & \alpha_2 +
\frac{2}{3}C e^{-x} & k_2 \\ 0 & 0 &  \alpha_3 + \frac{2}{3}Ce^{-x}
 \end{pmatrix} } 
{ \begin{pmatrix}
c_{21} \\ c_{22} \\ c_{23}
\end{pmatrix}}  
= D e^{-2x}\left(\begin{array}{c}c_{11} \\ c_{12}  \\ c_{13}\end{array}\right),
\end{equation}  }
where, the parameters $\alpha_i\in\mathbb{C} , \,  k_i\in \{0, -1, -2\}$  satisfy

\begin{enumerate}
\item 
$\alpha_1\ne\alpha_2\ne\alpha_3\ne\alpha_1$ , $k_1=k_2=0$ for the case 1,
\item 
$\alpha_1\ne\alpha_2=\alpha_3$ , $k_1=0$, $k_2=-1$ for the case 2,
\item 
$\alpha_1=\alpha_2=\alpha_3$ , $k_1=-1$ $k_2=-2$ for the case 3.
\end{enumerate}

Thus, every metric admitting precisely two projective vector fields
comes from the solution of the system above. Note that in
view of Lemmas~\ref{trivial} and \ref{trivial1} we can assume
\begin{itemize}
\item[(a)]   
$C\ne0$ and $D=B=0$,  \label{(b)}
\item[(b)] 
or  $D\ne0$, $C=0$, and if $A=0$, then $B\ne2$ and $B\ne 1/2$. \label{(a)}
\end{itemize}

\begin{Lemma}\label{solution}
Consider the above system of ODE corresponding to one of the
cases 1,2,3. Then, the following holds.

\begin{enumerate}
\item
If the condition (a) holds, the system admits the trivial
solution $c_{ij}\equiv 0$ only. \item If the condition (a) holds
and $A\ne0$, then the system admits the trivial solution
$c_{ij}\equiv 0$ only.

\item If the condition (b) holds and  $A=0$, then
\begin{itemize}

\item[\bf ($\ast$)] 
for $B\ne1$ the general solution of the system corresponds to

\begin{equation}\label{eq.1}
a=\lambda\, \left(
\begin{array}{cc}
e^{\frac{2}{3}Bx} & 0 \\ 0 & \left(
D\frac{e^{2(B-1)x}}{B-1}+H\right) e^{-\frac{4}{3}Bx}
\end{array}
\right)
\end{equation}
where $H\in\bbR$, or \label{Bne1}

\item[\bf ($\ast\ast$)]  \label{B=1} 
for $B=1$ the general solution of the system corresponds to
\begin{equation}\label{eq.2}
a=\lambda \, \left(
\begin{array}{cc}
e^{\frac{2}{3}x} & 0 \\ 0 & \left( 2Dx+H\right) e^{-\frac{4}{3}x}
\end{array}
\right)
\end{equation}
where $H\in\bbR$.

\end{itemize}

\end{enumerate}
\end{Lemma}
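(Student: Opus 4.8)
The plan is to solve the combined system --- the nine first-order equations \eqref{eqM} together with the three algebraic relations \eqref{3eq} --- by exploiting its triangular structure and the rigidity coming from the fact that every coefficient is a constant or a constant multiple of $e^{x}$, $e^{-x}$, or $e^{-2x}$; the reduction runs uniformly in Cases 1, 2, 3, the coupling constants $k_i$ merely turning each "solve a scalar equation" step into a back-substitution down a $3\times3$ upper-triangular block. I would split the argument into three regimes: condition (a); condition (b) with $A\neq0$; and condition (b) with $A=0$. \emph{Condition (a) ($D=0$, $C\neq0$):} the right-hand side of \eqref{3eq} vanishes, so \eqref{3eq} is a homogeneous upper-triangular system for $(c_{21},c_{22},c_{23})$ with diagonal entries $\alpha_i+\tfrac23 C e^{-x}$; since $C\neq0$, each such entry is a nonconstant real-analytic function of $x$ and so vanishes only on a discrete set, and back-substituting from the bottom up gives $c_{21}\equiv c_{22}\equiv c_{23}\equiv 0$. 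Inserting $c_{2j}\equiv0$ and $D=B=0$ into rows $7$--$9$ of \eqref{eqM} turns those equations into the algebraic relations $0=(\tfrac13 C e^{-x}-\alpha_i)c_{1i}+k_i c_{1,i+1}$, and the same "nonconstant leading coefficient" argument forces $c_{1i}\equiv0$; rows $4$--$6$ then become $0=(\tfrac43 C e^{-x}-\alpha_i)c_{0i}+k_i c_{0,i+1}$ and give $c_{0i}\equiv0$. So only the trivial solution survives, which settles the triviality claims in this regime.

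\emph{Condition (b) with $A\neq0$ ($C=0$, $D\neq0$).} Now \eqref{3eq} reads $\Lambda\,(c_{21},c_{22},c_{23})=D e^{-2x}\,(c_{11},c_{12},c_{13})$, with $\Lambda$ the \emph{constant} upper-triangular matrix with diagonal $(\alpha_1,\alpha_2,\alpha_3)$ and superdiagonal $(k_1,k_2)$. Where $\Lambda$ is invertible I would use this to express each $c_{2i}$ as $e^{-2x}$ times a constant-coefficient combination of the $c_{1j}$, substitute back into \eqref{eqM}, and retain the $-Ae^{x}$ and $-2Ae^{x}$ entries of $M$; collecting terms produces a forced balance among the mutually incompatible exponential types $e^{x}$ and $e^{-2x}$ and the $B$-dependent homogeneous modes (built from $e^{\frac23 Bx}$, $e^{-\frac13 Bx}$, $e^{-\frac43 Bx}$) --- the same kind of incompatibility already used in the proof of Lemma~\ref{trivial1} --- forcing every $c_{ij}$ to vanish. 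When some $\alpha_i=0$, the corresponding row of \eqref{eqM} is used directly: it still carries an isolated $Ae^{x}$ (or $De^{-2x}$) term with no matching partner, and exponential matching again collapses everything. A short list of resonant values of $B$ (where $e^{x}$ accidentally coincides with a $B$-mode) must be inspected separately.

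\emph{Condition (b) with $A=0$ ($A=C=0$, $D\neq0$).} Here $M$ is block \emph{lower}-triangular and I would integrate it from the top: rows $1$--$3$ give $c_{0i}=\gamma_i e^{\frac23 Bx}$; rows $4$--$6$ become scalar equations $c_{1i}'+\tfrac13 B\,c_{1i}=(\text{const})\,e^{\frac23 Bx}$, solved by $c_{1i}=\delta_i e^{-\frac13 Bx}+(\text{const})\,e^{\frac23 Bx}$; and rows $7$--$9$ give $c_{2i}'+\tfrac43 B\,c_{2i}$ equal to a combination of $e^{(\frac23 B-2)x}$, $e^{-\frac13 Bx}$, $e^{\frac23 Bx}$, whose particular solution has its $e^{(\frac23 B-2)x}$ piece resonant with the homogeneous mode $e^{-\frac43 Bx}$ exactly when $B=1$. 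Then I would impose \eqref{3eq} (with $C=0$): matching exponential types on the two sides kills every coefficient attached to a nonzero $\alpha_i$ and, in Cases 2 and 3, also the polynomial-in-$y$ coefficients, so that $a_{11},a_{12},a_{22}$ are $y$-independent; the remaining constraint forces $a_{12}\equiv0$ since $D\neq0$, i.e. the metric is diagonal. One is then left with $a_{11}'=\tfrac23 B\,a_{11}$ and $a_{22}'+\tfrac43 B\,a_{22}=2De^{-2x}a_{11}$, which integrate to $a_{11}=\lambda e^{\frac23 Bx}$ and, for $B\neq1$, to the $a_{22}$ of \eqref{eq.1} (the scale $\lambda$ and the homogeneous-mode constant $H$ being the two free parameters), while for $B=1$ the resonance $\tfrac23 B-2=-\tfrac43 B$ produces the extra factor $x$, i.e. the $2Dx+H$ of \eqref{eq.2}.

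\emph{Expected main obstacle.} The conceptual steps above are all short; the real work is the exponential bookkeeping in the two sub-regimes of condition (b). One must enumerate the coincidences among the relevant exponents $\tfrac23 B$, $-\tfrac13 B$, $-\tfrac43 B$, $\tfrac23 B-2$, $-\tfrac13 B-2$, $1$ --- a short list of exceptional values of $B$, among which $B=1$ genuinely changes the normal form (to \eqref{eq.2}) while $B=2$ and $B=\tfrac12$ are already excluded by the hypothesis that the metric admits exactly two projective vector fields --- verify that in every remaining case the only solutions are the claimed ones, and carry each back-substitution through the $k_i$-couplings of Cases 2 and 3. Getting this enumeration complete and correct, so that no nontrivial solution is overlooked, is the main difficulty.
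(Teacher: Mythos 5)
Your treatment of condition (a) and of condition (b) with $A=0$ is essentially the paper's own argument: the homogeneous upper-triangular system \eqref{3eq} kills $c_{2i}$, back-substitution through rows $7$--$9$ and $4$--$6$ kills the rest; and in the $A=0$ regime the block-triangular structure lets you integrate explicitly, with the resonance $\tfrac23B-2=-\tfrac43B$ at $B=1$ producing the $2Dx+H$ factor of \eqref{eq.2}. (You also correctly read part~2 of the statement as referring to condition (b) with $A\neq0$, despite the typo.)

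The gap is in that second regime, condition (b) with $A\neq0$. There the system \eqref{eqM} is genuinely coupled with variable coefficients ($-Ae^{x}$ and $2De^{-2x}$ both present), so its solutions are \emph{not} a priori finite combinations of the exponentials $e^{\frac23Bx}$, $e^{-\frac13Bx}$, $e^{-\frac43Bx}$, $e^{x}$, $e^{-2x}$; the ``forced balance among mutually incompatible exponential types'' you invoke therefore has nothing to act on. A single linear relation $\sum_k f_k(x)\,c_k(x)=0$ with unknown functions $c_k$ cannot be split by exponential type. What is needed --- and what the paper does --- is to \emph{prolong} the algebraic constraint: differentiate $\alpha_i c_{2i}=De^{-2x}c_{1i}$ along \eqref{eqM} twice, each time replacing the derivatives $c_{ij}'$ by their ODE values, so that each triple $(c_{0i},c_{1i},c_{2i})$ satisfies a $3\times3$ homogeneous \emph{algebraic} system $M_1c=0$; one then checks that $\det M_1$ is not identically zero (its $e^{-6x}$-coefficient is $-8D^4A^2\neq0$), forcing $c\equiv0$ pointwise. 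Your substitution $c_2=De^{-2x}\Lambda^{-1}c_1$ followed by comparison with rows $7$--$9$ is exactly the first of these two prolongation steps, but it yields only three relations among six unknown functions; without the second differentiation you do not have a full-rank system, and the exponential-matching shortcut does not close this deficit. The same determinant device (with the $e^{-2x}$-coefficient $5\alpha^4D(B-2)$ and the hypothesis $B\neq2$) is also how the paper disposes of the nonzero $\alpha_i$ in the $A=0$ case before the explicit integration you describe.
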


\begin{Rem} By \eqref{a} we have
$g=a/(\det(a))^2$. Then, the solution \eqref{eq.1} for $H= 0$
($H\ne0$, respectively)  corresponds to the metric~\eqref{1a} 
(to the metric \eqref{1b}, respectively) from Theorem~\ref{main}, after the
coordinate change $(x_\mathrm{new}, y_\mathrm{new})=(x+c_1,c_2 y)$ 
for the appropriate $c_1,\,c_2 \in \bbR$ and by setting $b:= 2(1-B)$. Recall that  
$B\ne \tfrac{1}{2}, \  1, \  2   $ by assumptions, 
 which implies $b\ne 1, 0, -2$. 
 Similarly, the solution \eqref{eq.2} corresponds to the metric
\eqref{1c} from Theorem~\ref{main}.
\end{Rem}

\noindent {\em Sketch of the proof of Lemma~\ref{solution}.}  Before  giving 
  detailed calculations (below),
  let us  explain  the ideas staying behind, and one more  proof which 
   is actually simpler than one staying below, if we allow    calculations 
   done with  the help of modern computer algebra programs. 

The 'computer algebra' proof is very straightforward: the system \eqref{eqM} 
can be explicitly solved (Maple does it), the general
solution depends on $9$ constants, say $C_1,...,C_9$. Substituting  the solution in the equations \eqref{3eq},
we obtain algebraic relations on the constants $C_1,...C_9$ and $A,B,C,D$. Analyzing these relations,  one  obtains the Lemma.

Now let us explain a trick which allowed us to give a hand-written ( = without the help of computer algebra programs) proof. Differentiate the equations  \eqref{3eq} by $x$ and substitute the values of $\frac{\textrm d}{\textrm{d} x} c_{ij}$ given by \eqref{eqM} inside.  We obtain three new
linear equations on $c_{ij}$.   Repeat the procedure: Differentiate these  new equations   by  $x$ and substitute the values of $\frac{\textrm d}{\textrm{d} x} c_{ij}$ given by \eqref{eqM} inside. We obtain another triple of linear equations on $c_{ij}$. These two triples of equations together with  \eqref{3eq} gives us 9 linear equations on 9 functions $c_{ij}$.  The corresponding $9-$by$-9$ matrix can be explicitly constructed. One immediately sees that under assumptions of the first and the second statements of the Lemma this matrix is nondegenerate almost everywhere   implying $c_{ij}\equiv 0$.
Under assumptions of the third statement of the
 Lemma, the matrix  has rank $7$ implying that the system \eqref{eqM} can be reduced to two ODE, which can be explicitly solved. Their solution gives us the third statement of the Lemma.

\noindent {\em Proof of Lemma~\ref{solution}.} { Assume  $B=D=0$,
$C\ne0$. } Then, the equations \eqref{3eq} imply $c_{21}\equiv
c_{22}\equiv c_{23}\equiv 0$. Substituting this in the last three
equations of \eqref{eqM}, we obtain the system
$$\left(
\begin{array}{ccc}
\frac{1}{3}Ce^{-x}-\alpha _{1} & k_1 & 0  \\ 0 &
\frac{1}{3}Ce^{-x}-\alpha _{2} & k_2  \\  0 & 0 &
\frac{1}{3}Ce^{-x}-\alpha _{3}
\end{array}
\right) \left(
\begin{array}{c}
c_{11} \\ c_{12} \\ c_{13}
\end{array}
\right) = \left(
\begin{array}{c}
0 \\ 0 \\ 0
\end{array}
\right), $$ which evidently implies $c_{11} \equiv  c_{12} \equiv
c_{13}\equiv 0$. Substituting  $c_{11} \equiv  c_{12} \equiv
c_{13}\equiv c_{21}\equiv c_{22}\equiv c_{23}\equiv 0$ in the fourth,
fifth and sixth equations of \eqref{eqM}, we obtain the system

$$\left(
\begin{array}{ccc}\frac{4}{3}Ce^{-x}-\alpha _{1} & k_1 & 0 \\ 
0 & \frac{4}{3}Ce^{-x}-\alpha _{2} &
k_2
\\ 0 & 0 & \frac{4}{3}Ce^{-x}-\alpha _{3}
\end{array}
\right)\left(
\begin{array}{c}
c_{01} \\ c_{02} \\ c_{03}
\end{array}
\right) = \left(
\begin{array}{c}
0 \\ 0 \\ 0
\end{array}
\right),  $$  implying $c_{01} \equiv  c_{02} \equiv c_{03}\equiv
0$. Thus, the system admits only trivial solutions. The first
statement of the lemma is proved.

Now let us prove the second statement of the lemma. We assume
$D\ne0$, $C=0$, $A\ne0$ and show that the system admits only
trivial solutions.

Let us take the thrid, the sixth and the ninth equations of
\eqref{eqM} and the last equation of \eqref{3eq}. We see that
these equations contain only $c_{03}, c_{13}, c_{23}$  and can be
written as follows.

\begin{equation}\label{s2}
 {\small\tiny{\rm \Large \frac{d}{dx}}} \left(
\begin{array}{c}
c_{03} \\ c_{13} \\ c_{23}
\end{array}\right)=\left(
\begin{array}{ccc}
 \frac{2}{3}B & -Ae^{x} & 0 \\-\alpha _{3} &  -\frac{1}{3}B &-2Ae^{x} \\2De^{-2x} & -\alpha _{3}
&-\frac{4}{3}B
 \end{array} \right)
\left(\begin{array}{c}c_{03} \\ c_{13} \\ c_{23} \end{array}\right)
\end{equation}

\begin{equation}\label{s2b}
  \alpha_3 c_{23}=  D e^{-2x}  c_{13}
\end{equation}

Take the equation \eqref{s2b}, differentiate it by $x$ (we obtain
a  linear equation  in  $c_{i3}$ and $\frac{d}{d\, x}c_{i3}$), and
substitute the values of $\frac{d}{d\, x}c_{i3}$ given by
\eqref{s2} inside. We obtain  new linear equations in $c_{i3}$.
Let us play the same game with this new equation, i.e.,
differentiate  by $x$  and substitute the values of $\frac{d}{d\,
x}c_{i3}$  inside. We obtain one more linear equation in $c_{i3}$.
The equation~\eqref{s2b} together with two new obtained equations
can be written as  $M_1c_1=0$, where $c_1$ is the column with
components $c_{03}, c_{13}, c_{23}$ and   $M_1$ is the following
$3$-by-$3$ matrix\\ 
\centerline{\small 
\noindent \begin{tabular}{|c|c|c|}  \hline  $ 0$ & $-{e^{-2\,x}} D
$&${\alpha_3}$
\\ \hline ${\medskip}3\,{\alpha_3}\,{e^{-2\,x}}D$&$-{{\alpha_3}}^{2}+
2\,{e^{-2\,x}}D+1/3\,{e^{-2\,x}}DB $ &$ -4/3\,{
\alpha_3}\,B+2\,{e^{-x}}DA $
\\ \hline  $ {\medskip}{{\alpha_3}}^{3}-{\alpha_3}\,B{e^{-2\,x}}D- $ & $ 5/3\,{{
\alpha_3}}^ {2}B-5\,{e^{-x}}DA{\alpha_3}-4\,{e^{-2\,x}}D- $ &
$  2\,{{\alpha_3}}^{2}A{e^{x}}+{\frac {16}{9}}\,{ 
\alpha_3}\,{B}^{2}
 $ \\ $
8\, {\alpha_3}\,{e^{-2\,x}}D+4\,{e^{-3\,x}}{D}^{2}A $ & $
 4/3\,{e^{-2\,x}}DB-1/9 \,{e^{-2\,x}}D{B}^{2} $ & $
-6\,{e^{-x}}DA-10/3\,{e^{-x}}DAB $ \\ \hline
\end{tabular}}

\vspace{2ex}

  We see that if $A\ne0$,  then  the  determinant of
the matrix is nonzero at almost every point (because the term at
$e^{-6x}$ is equal to $-8\,{D}^{4}{A}^{2}$). Then, $c_{03}\equiv
c_{13}\equiv c_{23}\equiv 0$.

 Let us substitute $c_{03}\equiv c_{13}\equiv c_{23}\equiv 0$ in
the second, fifth and eighth equations of~\eqref{eqM} and in the second
equation of~\eqref{3eq}. We obtain the equation of the form 
$$
 {\small\small \tiny{\rm \Large \frac{d}{dx}}} 
\left(\begin{array}{c}
c_{02} \\ c_{12} \\ c_{22}
\end{array}\right)=\left(
\begin{array}{ccc}
 \frac{2}{3}B & -Ae^{x} & 0 \\-\alpha _{2} &  -\frac{1}{3}B &-2Ae^{x} \\
2De^{-2x} & -\alpha _{2} &-\frac{4}{3}B
 \end{array} \right)
\left(\begin{array}{c}c_{02} \\ c_{12} \\ c_{22}\end{array}\right),
$$ 
$$
  \alpha_2 c_{22}=  D e^{-2x}  c_{12}.
$$ 
We see that these equations are very similar to \eqref{s2} and~\eqref{s2b}
(the only difference is $\alpha_2$ in the place of~$\alpha_3$). 
Arguing as above, we obtain $c_{02}\equiv c_{12}\equiv c_{22}\equiv0$. Substituting$c_{03}\equiv c_{13}\equiv c_{23}
\equiv c_{02}\equiv c_{12}\equiv c_{22}\equiv0$  
in the first, fourth and seventh equations of~\eqref{eqM} and in the
first equation of~\eqref{3eq}, we obtain the equations in the form 
$$ 
{\small\small \tiny{\rm \Large \frac{d}{dx}}}
\left(
\begin{array}{c}
c_{01} \\ c_{11} \\ c_{21}
\end{array}\right)=\left(
\begin{array}{ccc}
 \frac{2}{3}B & -Ae^{x} & 0 \\-\alpha_{1} &  -\frac{1}{3}B &-2Ae^{x} \\
2De^{-2x} & -\alpha _{1}
&-\frac{4}{3}B
 \end{array} \right)\left(
\begin{array}{c}
c_{01} \\ c_{11} \\ c_{21}
\end{array}\right),
$$ 
$$
  \alpha_1 c_{21}=  D e^{-2x}  c_{11}.
$$ 
We see that these equations are again very similar to
\eqref{s2}and~\eqref{s2b} (the only difference is $\alpha_1$ in the
place of $\alpha_3$). Arguing as above, we obtain $c_{01}\equiv
c_{11}\equiv c_{21}\equiv 0$. Finally, the system admits only
trivial solutions. The second statement of the lemma is proved.

Now let us prove the last statement of the lemma.  
Our first goal is to prove that if $\alpha_i\ne0$, 
then  the components $c_{0i}, c_{1i}, c_{2i}$ of the solution are
identically zero.  Indeed, otherwise, arguing as in the proof of
the second statement, we come to the system $M_1c_1=0$, where
$c_1$ is the column with components $c_{03}, c_{13}, c_{23}$ and
$M_1$ is the $3$-by-$3$ matrix from the proof of the second
proposition.

If $\alpha_3\ne0$ then the determinant of the matrix is still
nonzero at almost every point (because the term at $e^{-2x}$ is
equal to $5\,{\alpha_{{2}}}^{4}K \left( B-2 \right) $, and $B\ne
2$ by assumption). Then, the components $c_{03}, c_{13}, c_{23}$
vanish. Doing the same for every $i$, we obtain that if
$\alpha_i\ne0$ then $ c_{0i}\equiv c_{1i}\equiv c_{2i}\equiv0$.

Finally, without loss of generality, we can assume that
$\alpha_1=\alpha_2=\alpha_3=0$, so our system becomes:

\begin{equation} { \tiny {\rm \Large \frac{d}{dx}}
\begin{pmatrix} 
c_{01}
\\ c_{02} \\ c_{03} \\ c_{11}
\\ c_{12} \\ c_{13} \\ c_{21} \\ c_{22} \\ c_{23}
\end{pmatrix} 
= 
\begin{pmatrix} \frac{2}{3}B & 0 & 0 & 0 & 0 & 0
& 0 & 0 & 0 \\ 0 & \frac{2}{3}B & 0 & 0 & 0 & 0 & 0 & 0 & 0
\\ 0 & 0 & \frac{2}{3}B & 0 & 0 & 0 & 0 & 0 & 0 \\
0 & -1 & 0 & -\frac{1}{3}B & 0 & 0 & 0 & 0 & 0
\\ 0 & 0 & -2& 0 & -\frac{1}{3}B & 0
& 0 & 0 & 0 \\ 0 & 0 & 0 & 0 & 0 & -\frac{1}{3}B & 0 & 0 & 0
\\ 2De^{-2x} & 0 & 0 & 0 & -1& 0 &
-\frac{4}{3}B & 0 & 0 \\
0 & 2De^{-2x} & 0 & 0 & 0 & -2 & 0 & -\frac{4}{3}%
B & 0 \\
0 & 0 & 2De^{-2x} & 0 & 0 & 0 & 0 & 0 & -\frac{4%
}{3}B \end{pmatrix} 
\begin{pmatrix} 
c_{01} \\ c_{02} \\ c_{03} \\ c_{11}
\\ c_{12} \\ c_{13} \\ c_{21} \\ c_{22} \\ c_{23} 
\end{pmatrix} 
\label{Mbis} }\end{equation}
\begin{equation}\label{3eqbis}
\begin{pmatrix}
0 & -1 & 0 \\ 0 & 0& -2
\\ 0 & 0 & 0
\end{pmatrix}
\begin{pmatrix} 
c_{21} \\ c_{22} \\ c_{23}
\end{pmatrix}
= D e^{-2x} 
\begin{pmatrix} 
c_{11} \\ c_{12} \\ c_{13}
\end{pmatrix} 
,\end{equation}

This system can easily be solved. Indeed, assume first $B\ne1$.
The second and the third equations of~\eqref{Mbis} give
\begin{equation}\label{first} 
{c_{02}} \left( x \right) ={C_2}\,{e^{2/3\,Bx}},{c_{03}} \left( x \right)
 ={
C_1}\,{e^{2/3\,Bx}} . 
\end{equation} 
The last equation of~\eqref{3eqbis} gives $c_{13}=0$. 
 Substituting all these into the
two last equations of~\eqref{Mbis} gives two ordinary differential
equations 
\begin{equation} \label{one} 
\left.\begin{array}{ccc}
 { \tiny{\rm \Large \frac{d}{dx}}}{c_{12}} \left( x \right) 
+4/3\,B{c_{12}}
\left( x \right) & = &2\,D{C_2}\,{e^{(2/3\,B-2)x}}\\
 {\small\small \tiny{\rm \Large \frac{d}{dx}}}{c_{13}} 
\left( x \right) +4/3\,B{c_{13}}
\left( x\right)& = &2\,D{C_1}\,{e^{(2/3\,B-2)x}}\end{array}\right\}, 
\end{equation}
whose solutions are
\begin{eqnarray}\label{c12}  
c_{12} \left( x \right) 
&=&{e^{-4/3\,Bx}}{C_4}+{\frac {{e^{-2\,x+2/3\,Bx}}D{C_2}}{-1+B}}\\ \label{c12bis}   {c_{23}} 
\left( x\right) 
&=&{e^ {-4/3\,Bx}}{C_3}+{\frac {{e^{-2\,x+2/3\,Bx}}D{C_1}}{-1+B}}. 
\end{eqnarray}
Substituting all these into the fourth and fifth equation of~\eqref{Mbis}, 
we obtain \begin{equation} 
\label{C+0} \left.
\begin{array}{cl} {(B-1) { \left( B-2 \right) {
C_4}\,({e^{(-4/3\,B+2)x}}+1)}} &=D{{{C_2}\, 
\left( 2\,B-1\right) {e^{2/3\,x\left( B-1 \right) }}}}\\
 (B-1)\,{{ \left( B-2 \right) {C_3}\,{e^{-2\,x \left(B-1\right) }}}}&
={D{C_1}\, \left( 2\,B-1 \right) }
 \end{array} \right\}
\end{equation}
implying $C_1=C_2=C_3=C_4=0$. 
Therefore, $c_{02}\equiv c_{03}\equiv c_{12}\equiv c_{13}\equiv 0$, 
which, in view of equations~\eqref{3eqbis}, 
implies $c_{22}\equiv c_{23}\equiv 0$.

Finally, $ c_{02}\equiv c_{03}\equiv c_{12}\equiv c_{13}\equiv
c_{22}\equiv c_{23}\equiv 0$, and the equations\eqref{Mbis}
become 
$$
{\small\small \tiny{\rm \Large \frac{d}{dx}}} 
\left(\begin{array}{c} c_{01} \\ c_{03}\end{array}\right)
=\left(\begin{array}{cc}2/3B & 0 \\ 2 D e^{-2x} & -4/3B\end{array} \right)
\left(\begin{array}{c}c_{01} \\ c_{03}\end{array}\right),
$$ 
implying $c_{01}= \textrm{Const}_1 e^{2/3Bx},
c_{03}=\textrm{Const}_1(\frac{D}{B-1}e^{2(B-1)x}+H)e^{-4/3 B \, x}$
which means that the matrix $a$ is as in Lemma~\ref{solution}.

The case $B=1$ is completely similar. 
In this case instead of \eqref{c12} and \eqref{c12bis} we have:
$$
\begin{array}{c}
{c_{12}(x)}= (2D{C_2}x+{C_4})e^{-4/3x}, \, 
{c_{13}(x)}= (2D{C_1}x+{C_3})e^{-4/3x}.
\end{array}
$$
Substituting in the fifth and sixth equation of~\eqref{Mbis} 
we obtain:
\begin{equation}
\left.
\begin{array}{r}
2D{C_2}e^{-4/3x} - 2D{C_2}e^{-4/3x}x - {C_4}e^{-4/3x}
+ 2{C_1}e^{2/3x}=0\\
\frac{4}{3}(2D{C_1}x + {C_3})e^{-4/3x}=0
\end{array}
\right\}
\end{equation}
implying $C_1=C_2=C_3=C_4=0$. Therefore,
 $c_{02}\equiv c_{03}\equiv c_{12}\equiv c_{13}\equiv 0$, 
which, in view of equations~\eqref{3eqbis}, 
implies $c_{22}\equiv c_{23}\equiv 0$. 
Therefore the equations~\eqref{Mbis} become
$$
{\small\small \tiny{\rm \Large \frac{d}{dx}}} 
\left(\begin{array}{c}c_{01} \\ c_{03}\end{array}\right)
=\left(\begin{array}{cc} 2/3 & 0 \\ 2 D e^{-2x} & -4/3\end{array} \right)
\left(\begin{array}{c}c_{01} \\ c_{03}\end{array}\right),
$$ 
implying $c_{01}= \textrm{Const}_1 e^{2/3x},
c_{03}=\textrm{Const}_1(2Dx+H)e^{-4/3x}$,
which means that the matrix $a$ is as in Lemma~\ref{solution}. 
Lemma~\ref{solution} is proved. \qed 

\subsection{ Metrics admitting precisely three vector fields}

In the previous section we obtained a list of all the metrics
with $\dim(\eup)=2$. The goal of this
section is to obtain the list of the metrics with $\dim(\eup)=2$. Because of Lemmas~\ref{trivial}
and~\ref{trivial1}, without loss of generality, 
we can and will assume that our connection is
\begin{equation} \label{1/2}
 y''(x) = \frac{1}{2} y'(x)+ D
e^{-2x} (y'(x))^3, \qquad\text{where~$D\ne0$},
\end{equation}
and that every projective vector field is a linear combination of
the vector fields
\begin{equation} \label{fields}
X:=(0,1), \ Y:=(1, y), \ Z:=(2y, 1+y^2).
\end{equation}

Let us show that a metric having precisely three projective vector fields 
has a Killing vector field. Because of Corollary~\ref{knebel}, 
it is sufficient to find a metric whose projective connection is~\eqref{1/2}
and that admits a Killing vector field. 
A metric satisfying both properties is
\begin{equation} \label{1/3}
e^{3x}dx^2- 2D \ e^{x} dy^2.
\end{equation}
It admits a Killing vector field because its entries are independent of $y$ 
and its projective connection is~\eqref{1/2}. 
Since only the metrics of constant curvature admit
more than one Killing vector field, 
the Killing vector field is unique up to multiplication by a constant.
As we recalled in Section~\ref{02}, 
the algebra of the projective vector fields is
isomorphic to $\eusl(2,\bbR)$. Recall that the algebra $\eusl(2,\bbR)$ 
is the algebra of the $2$-by-$2$ matrices with trace~$0$. 
It is well-known that up to an automorphism of this algebra every element is proportional to one of the following matrixes:
\begin{equation}\label{matrixes}
{\bf X} := \left(\begin{array}{cc}0& -1\\0&0\end{array}\right), \ \
{\bf Y} := \left(\begin{array}{cc}-1/2 & 0\\0& 1/2\end{array}\right)
\ \ \textrm{or} \ \ 
{\bf Z} := \left(\begin{array}{cc}0&-1\\1&0\end{array}\right).
\end{equation}
Comparing the commutation relations of the vector fields
$X,Y,Z$ with the commutative relation of the matrixes 
${\bf X, Y, Z}$ we see that there exists an isomorphism of the algebras
that sends the vector field $X,Y, Z$ to the matrixes ${\bf X, Y, Z}$, respectively.

Thus, without loss of generality we may assume 
that the Killing vector field is $X$, $Y$, or $Z$.
In the next three sections, we will consider each of these cases in turn.

\subsubsection{Assume $X=(0,1)$ is a Killing vector field.}
Then the components of the matrix $a=g/\det(g)^{2/3}$ 
do not depend on the $y$-coordinate, and the equations~\eqref{lin1} read
$$ 
\left. \begin{array}{rcc}
{\frac {\p {a_{11}}}{\p x}}  -1/3\, {a_{11}} &=&0 \\
 2\,\frac{\p a_{12}}{\p x}  +1/3\, {a_{12}}  &=&0 \\
\frac{\p a_{22}}{\p  x}  +2/3\,{a_{22}}-2\,{D}{a_{11}} &=&0 \\
 -2\,{D}{a_{12}}&=&0
\end{array} \right\}
$$
This system can be solved, the solution is
$$
a_{12}=0, \ \
a_{11}={e^{1/3\,  x} D^{-2/3} 2^{-2/3} C_2},  \ \
a_{22}=(C_1 \, e^{  x}+C_2)2^{1/3}D^{1/3}e^{-5/3}.
$$
Thus, the metric is
\begin{equation*}
g=\left(
\begin{array}{cc}
\frac{e^{3x}}{C_2(C_{1}e^{x}+C_{2})^{2}} & 0 \\
0 & \frac{2De^{x}}{C_{2}^2(C_{1}e^{x}+C_{2})}
\end{array}
\right)
\end{equation*}

We see that if $C_1=0$, then, after an appropriate scaling
$y_\mathrm{new}= c\, y $, the metric
coincides with the metric~\eqref{corol2} from Theorem~\ref{main}.
If $C_1\ne0$, then, after an appropriate coordinate change
$(x_\mathrm{new},y_\mathrm{new})= (x+c_1, c_2\,y)$,  
the metric coincides with the metric~\eqref{corol1} from Theorem~\ref{main}.

\subsubsection{ Assume $Y=(1,y)$ is a Killing vector field. }
Without loss of generality we can assume that $D=\pm 1$. 
Indeed, $D\ne0$ by Lemma~\ref{trivial1}, and,
after the scaling $y_\mathrm{new}= \alpha \, y_\mathrm{old}$, 
the projective connection~\eqref{1/2} becomes 
$$
y''=\frac{1}{2} y' + \frac{D}{\alpha^2}(y')^3.
$$
Consider the coordinate change
$
(x_\mathrm{new}, y_\mathrm{new})
=\left(|y_\mathrm{old}|/e^{x_\mathrm{old}},\ln(|y_\mathrm{old}|)\right)
$.
In this new coordinate system, the vector field $Y$ is $(0,1)$ 
and the projective connection is
$$
y''{}=-\frac{3}{2x}y' + \left(\frac{x}{2}-D x^3\right)(y')^3.
$$
Then the components of the matrix $a=g/\det(g)^{2/3}$ do not depend on the
$y$-coordinate, and the equations~\eqref{lin1} have the form
\begin{equation}  
\left. \begin{array}{rcc}
{\frac{\p a_{11}}{\p  x}} + \frac{{a_{11}}}{x} &=&0 \\
 2\,\frac{\p  a_{12}}{\p  x} - \frac{{a_{12}}}{x} &=&0 \\
{\frac {\p  a_{22}}{\p  x}} -2\frac{{a_{22}}}{x}-(x-2Dx^3){a_{11}} &=&0 \\
 -(x-2Dx^3) {a_{12}} &=&0
\end{array} \right\}
\end{equation}
This system can be solved, the solution $a$ 
and the metric $g=a/\det(a)^2$ are
$$
a=\left(
\begin{array}{cc}
\frac{C_{2}}{x} & 0 \\
0 & {x}(C_{1}\,
x-C_{2}(1+2\,D x^{2}))
\end{array}
\right) $$ \begin{equation} \label{34}  g=\left(
\begin{array}{cc}
\frac{1}{C_2(C_{1}\, x-C_{2}(1+2\,D x^{2}))^{2}x} & 0 \\
0 & \frac{x}{C_2^2(C_{1}\, x-C_{2}(1+2\,D x^{2}))}
\end{array}
\right)
\end{equation} 

We see that 
the metric \eqref{34}  is the metric~\eqref{case2} with $\veps_1\veps_2=-1$ 
from Theorem~\ref{main}, possible after the coordinate change $x_{\mathrm{new}} = -x$.   

\subsubsection{Assume $Z=(2 y, 1+y^2)$ is a Killing vector field.}

As in the previous section, without loss of generality, 
we can assume $D=\pm 1$.

Consider the coordinate change
$$
x_\mathrm{new}=2\frac{y_\mathrm{old}^2+1}{e^{x_\mathrm{old}}}\,, 
\qquad
y_\mathrm{new}=2\arctan(y_\mathrm{old})
$$

In this new coordinate system, the vector field $Z$ is $(0,1)$ 
and the projective connection is
$$
y_{xx}=-\frac{3}{2x}y_x - \left(\frac{x}{2}+D x^3\right)y_x^3.
$$

Then the components of the matrix $a=g/\det(g)^{2/3}$ 
do not depend on the $y$-coordinate, and the equations~\eqref{lin1} are
\begin{equation}  
\left. \begin{array}{rcc}
{\frac{\p{a_{11}}}{\p x}} + \frac{a_{11}}{x} &=&0 \\
 2\,\frac{\p a_{12}}{\p x}  - \frac{a_{12}}{x} &=&0 \\
{\frac{\p{a_{22}}}{\p x}}  -2\frac{a_{22}}{x} -(x+2Dx^3){a_{11}} &=&0 \\
-(x+2Dx^3) {a_{12}} &=&0
\end{array} \right\}
\end{equation}

This system can be solved. 
The solution $a$ and the metric $g=a/\det(a)^2$ are
$$
a=\left(
\begin{array}{cc}
\frac{C_{2}}{x} & 0 \\
0 & {x}(C_{1}\,
x+C_{2}(1-2\,D x^{2}))
\end{array}
\right) \, , 
$$ \begin{equation} \label{35}  
g=\left(
\begin{array}{cc}
\frac{1}{C_2(C_{1}\, x+C_{2}(1-2\,D x^{2}))^{2}x} & 0 \\
0 & \frac{x}{C_2^2(C_{1}\, x+C_{2}(1-2\,´D x^2))}
\end{array}
\right). 
\end{equation} 

We see that,  the metric \eqref{35}  is
the metric~\eqref{case2} with $\veps_1\veps_2=1$ from Theorem~\ref{main}, possible after the coordinate change $x_{\mathrm{new}} = -x$.

\label{2.5}

\subsection{Why are the metrics from Theorem~\ref{main} mutually different?}
\label{different} 

Every metric from Theorem~\ref{main} has its own
individual properties invariant with respect to local isometries
that distinguish it within the metrics from Theorem~\ref{main}.

The individual properties we will use are 
\begin{itemize} 
\item
the structure of the algebra of the projective vector fields and
the place of the Killing vector field in this algebra. 
\item The following three functions: 
\begin{itemize} 
\item Scalar curvature $R:=\sum_{i,j,k} R^i_{ijk} g^{jk}$ 
\item The  square of the length of the derivative of the scalar curvature 
    $I:=\sum_{i,j} g^{ij}
    \frac{\p R}{\p x_i}\frac{\p R}{\p x_j}$
\item The laplacian of the scalar curvature \\
$\Delta_g
     R:=\frac{1}{\sqrt{\det(g)}} \sum_{i,j} \frac{\p }{\p x_i}
     \left( g^{ij} \sqrt{\det(g)}\frac{\p R}{\p x_j}\right)$
\end{itemize}
\end{itemize}

It is easy to distinguish the metrics~\eqref{1a},~\eqref{1b}, and~\eqref{1c} 
from the metrics~\eqref{corol2},~\eqref{corol1}, and~\eqref{case2}.
Indeed, the metrics~\eqref{1a}, \eqref{1b}, and \eqref{1c} 
have a two-dimensional space of projective vector fields 
while the metrics~\eqref{corol2},~\eqref{corol1}, and~\eqref{case2} 
each have a three-dimensional space of projective vector fields.

Let us distinguish the metrics \eqref{corol2}, \eqref{corol1}, 
and \eqref{case2}.  By construction, the Killing vector field 
for the metrics \eqref{corol2}, \eqref{corol1} 
(for the metric \eqref{case2}, respectively), 
is the element of the Lie algebra of projective vector fields 
that is isomorphic to $\eusl(2,\bbR)$, 
is conjugate to {\bf X} from \eqref{matrixes}
({\bf Y} or {\bf Z}, respectively). 
Thus, the metrics \eqref{corol2}, \eqref{corol1} 
cannot be isometric to \eqref{case2}.

Let us distinguish the metrics \eqref{corol2} and \eqref{corol1}. 
By direct calculation, one can see that $\frac{I}{9R^3}$ is equal to $1$
for the metric \eqref{corol2} 
and to ${\frac {8\,\left(e^x+{\veps_2}\right)^4}{\veps_2\, 
\left(3\,e^x+2\,\veps_2\right)^3}}$ for the metric \eqref{corol1}. 
Thus, the metrics \eqref{corol2} and \eqref{corol1} are different.

It is easy to see that two metrics \eqref{corol2} corresponding to different
values of the parameters $\varepsilon_i$ are not mutually isometric.
Indeed, the $\varepsilon_i$ are determined by the signature of the
metric and by the sign of the square of the length of the Killing
vector field.

Now let us show that the metrics \eqref{case2} corresponding to the
different values of the parameters $a$, $c$, and~$\varepsilon_i$ are different.

By direct calculation, we see that for the metrics \eqref{case2} 
the functions of $R$, $I$, $\Delta_gR$ are as follows
\begin{eqnarray*}
R&=& (3c\,
{x}^{2}+4\,{x}^{3}+6{\veps_2}\,x+1/2\,{\veps_2}\,c)/a
\\ I&=& \left( cx+2\,x^{2}+{\veps_2} \right)^{4}x/a^3
\\ \Delta_gR&=& \left( 2\, \veps_2+5c\,x+16\,{x}^{2}
\right)
 \left( cx+2\, x^{2}+\veps_2 \right)^{2}/a^2.\end{eqnarray*} 
We see that the mapping 
$$ 
(R, I, \Delta_gR):\mathbb{R} \setminus\{x\in \mathbb{R} \, : \ \ (cx+2x^2+\varepsilon_2) x=0\} \to \mathbb{R}^3$$ 
is an analytic   curve in $\mathbb{R}^3$, and  can be completed
 at $\{x\in \mathbb{R} \, : \ \ (cx+2x^2+\varepsilon_2) x=0\}$.  
If metrics (2c) corresponding to different values of  the parameters $a, \, c,  \, \veps_2$    are isometric, the images  of the corresponding curves coincide (as subsets of $\mathbb{R}^3$),
 which is not the case. 
 
Indeed, the point $x=0$ is determined by the the condition $I=0$,
$\Delta_gR\ne 0$. At this point $R={\veps_2}{}\frac{c}{2a}$ and
$\Delta_gR=2\veps_{2}^3/a^2$. Since $a> 0$ and $\veps_2=\pm 1$, 
  the curves corresponding to
different values of the parameters  are different,  and therefore the
metrics \eqref{case2} corresponding to different values of the
parameters $a$, $c$ and~$\veps_2$ are different as well.

The remaining parameter  $\varepsilon_1$ determines the role of the Killing vector
field in the Lie algebra of projective vector fields: 
if $\veps_1\veps_2=-1$ ($\veps_1\veps_2=1$, respectively), then the
Killing vector field corresponds to the matrix {\bf Y} ({\bf Z},
respectively) from~\eqref{matrixes}.

Now let us distinguish the metrics \eqref{1a}, \eqref{1b}, \eqref{1c}, 
and \eqref{corol1} corresponding to different values of the parameters.  In order to do this, let us observe that any
isometry between any two of these metrics must send $x$ to $x+x_0$. 
Indeed, the vector $(1,0)$ can be canonically given in isometry-invariant
terms as follows: For every Killing vector field $K$, 
consider the projective vector field $v$ such that
\begin{equation} \label{commu}
[K,v]=K.
\end{equation} 
It is easy to see two such vector fields $u,v$ satisfy $u-v =\lambda\,K$.
Indeed, every Killing vector field for any of these metrics has the
form $\alpha(0,1)$, and every projective vector field $v$ with the
property~\ref{commu} has the form $\beta(0,1) + \gamma(1,y)$.
(In the cases \eqref{1a}, \eqref{1b}, \eqref{1c} 
this is because the space of projective vector fields is two-dimensional. 
In the case \eqref{corol1}, this is because in $\eusl(2,\bbR)$ 
the relations $[K,v]=K$ and $[K,u]=K$ imply that
$K,u, v$ are linearly dependent.) Then, the relation $[K,v]=K$
implies $\gamma=1$, i.e., such vector field $v$ is uniquely
defined up to addition of $\lambda\,(0,1)$

Thus, the projective vector field satisfying the condition~\eqref{commu} 
must have $\gamma=1$, and its projection to the normal distribution 
to the Killing vector field is $(1,0)$. Thus, an isometry 
between any two of the metrics \eqref{1a}, \eqref{1b}, \eqref{1c} 
must send $x$ to $x+x_0$.

By direct calculation, we obtain that the function $R$ for the
metrics \eqref{1a}, \eqref{1b}, \eqref{1c}, and \eqref{corol1}
takes the forms
\begin{eqnarray*}
R_{\eqref{1a}} &=& \veps_1 b \, e^{-(b+2)x} \\ 
R_{\eqref{1b}} &=& \frac{\varepsilon_2 \, b }{2\, a} \left((b+2)e^{-2x} 
           + 2 \veps_2 \,  e^{-(b+2)x}\right)\\
R_{\eqref{1c}} &=& -\frac{1}{2a} (2x+1)e^{-2x}\\
R_{\eqref{corol1}} &=& {\frac{{\veps_2}\,
\left(3\,{e^{x}}+2\,{\veps_2} \right) }{2a{}\,e^{3x}}} .
\end{eqnarray*}
Clearly, the change $x_\mathrm{new}=x+c$ 
cannot translate any of these functions to the same functions corresponding to different values of the parameters. The remaining parameter ($\varepsilon_2$ for (1a), $\varepsilon_1$ for (1b)
 and for (2b), $\varepsilon $  for (1c))
  is determined uniquely by the  sign of the square of the length of the Killing vector. 

Thus, all the metrics from Theorem~\ref{main} are mutually different. 
Theorem~\ref{main} is proved.

\subsection{Proof of Corollary~\ref{corlst}}  

\label{proofcorlst} 
 We will first prove that   if the metrics $g$ and $\bar g$ on $M^2$  are projectively equivalent,  then the spaces $\mathcal{I}( g)$ and $\mathcal{I}(\bar g)$ are   isomorphic\footnote{This statement reflects the fact that the Killing equations are projectively  invariant, see \cite{Benenti3, eastwood1}.}. The canonical 
 isomorphism  is given  by $h\mapsto \left(\frac{\det \bar g}{\det g}\right)^{2/3}\, h$.

Indeed,  the re-parametrization map $$ 
\eta:TM\setminus{M} \to TM \setminus{M}, \  \    \eta(\xi)= \frac{|\xi|_{\bar g}}{|\xi|_{g}}\xi , $$  where 
$TM\setminus{M}$ denotes the tangent bundle without its zero section, takes the orbits of the  geodesic flow of the metric $\bar g$ to the orbits of the geodesic flow of the metric $g$. 
Then, for every $h\in \mathcal{I}(g)$,  the function $$\eta^*h:TM\to \mathbb{R},  \  \  \eta^*h(\xi) =   \left(\frac{|\xi|_{\bar g}}{|\xi|_{g}}\right)^2 h(\xi)=
 \frac{\bar g(\xi, \xi)}{g(\xi,\xi)}\, h(\xi)$$   is constant on the orbits of the geodesic flow of $\bar g$, i.e., is  an    integral of the geodesic flow of $\bar g$.  Using that  the functions $ \bar E(\xi):= \bar g(\xi, \xi)$ and $\bar I(\xi ):= g(\xi ,\xi )\left(\frac{\det(\bar g)}{\det( g)}\right)^{2/3}$ 
 are integrals of the geodesic flow of $\bar g$, we obtain that 
 the    function $\left(\frac{\det (\bar g)}{\det( g)}\right)^{2/3}\, h= \frac{\bar I}{\bar E} \, \eta^*h$ is an integral of the geodesic flow of $\bar g$ as well, i.e., the linear mapping  $h\mapsto  \left(\frac{\det (\bar g)}{\det (g)}\right)^{2/3}\, h$   
 maps  $\mathcal{I}(g)$  to     $\mathcal{I}(\bar g)$. Since we obtain the inverse mapping by interchanging $g$ and $\bar g$,    the mapping $h\mapsto \left(\frac{\det \bar g}{\det g}\right)^{2/3}\, h$ is an isomorphism.

A point $p\in M^2$ will be called {\em regular}, if two vectors from 
$\mathfrak{p}(g)$  are linearly independent at $p$. As we explained in Sections~\ref{02}, \ref{phenomena1},  the set of regular points  is open and  everywhere dense. 

 Let us first  prove Corollary~\ref{corlst} in a small  neighborhood of 
  a regular point.     From  Lemma~\ref{trivial1}  it follows, that if $\mathfrak{p}(g)=3$,      then the  projective connection  has the form  \eqref{1/2} in a certain local  coordinate system near every   regular point.    Since the projective connection of the metric  \eqref{1/3}    is  \eqref{1/2},  it is sufficient to show that the metric \eqref{1/3}  is superintegrable, which is indeed the case since the   functions 
\begin{eqnarray*}
H & = & \tfrac{1}{2}e^{3x} dx^2 - D \, e^{x}dy^2 \\
F_1&=& e^{2x} dy^2  \\
F_2& =& yH+ e^{3x}dxdy \\
F_3&=& y F_2+ 2e^{3x}(y dxdy + 4D^2dy^2)
\end{eqnarray*} 
 are linearly independent integrals of the geodesic flow of the metric \eqref{1/3}.   Thus, the restriction of every metric of nonconstant curvature 
 with $\dim\left(\mathfrak{p}(g)\right)=3$ to a neighborhood of a regular  point is superintegrable.

 Let us now  prove Corollary~\ref{corlst} at every point.  
  We will use the following observation from  \cite{CMH,Topalov}:  \\ {\it If $Z \in  \mathfrak{p}(g)$, then  the function  $I_Z:TM^2\to \bbR$
defined by
$$
I_Z(\xi)=-({\cal L}_Zg)(\xi,\xi)+\frac{2}{3} 
\mathbb{\rm
trace}(g^{-1}{\cal L}_Zg)\, g(\xi,\xi), 
$$  
lies in $\mathcal{I}(g)$.}  

This  observation implies that if $Z \in \mathfrak{p}(g)$ 
is a  Killing vector field on a certain open set, then it is  a Killing vector field everywhere. Indeed, if $I_Z\in \mathcal{I}(g)$ vanishes on the open subset, it vanishes everywhere, since it is constant on the orbits on the geodesic flow.

We have shown that $\dim(\mathcal{I}(g_{|U}))= 4$   
 for  a  certain neighborhood  $U$ of a regular point.
 By the result of Koenigs we recalled in Section~\ref{superintegrable},  in this neighborhood there exists a Killing vector field. 
As we explained above, this implies  the existence of   a Killing vector field $K\in \mathfrak{p}(g)$ defined on the whole $M^2$.

      Consider the linear mapping $F\mapsto Z_F$ from the proof of Lemma~\ref{superintegrable1} and the integral  
$F_K$ from Remark~\ref{rem.5}. Since $\spn(F_K)$  is the kernel of this mapping,   the mapping induces  an isomorphism between $\mathfrak{p}(g)$ and the quotient space  $\mathcal{I}(g_{|U})_{/\spn(F_K)}$. If 
  two  such  neighborhoods  $U_1$ and $U_2$ intersect,
 then the constructed  isomorphisms coincide on the intersection 
   and,  therefore,  induce   an isomorphism between   $\mathfrak{p}(g)$ and $\mathcal{I}(g_{|U_1\cup U_2})_{/\spn(F_K)}$.  Thus, 
   for every connected component $U_\textrm{Reg}$ of the set of the regular points, 
    $\mathfrak{p}(g)$ and $ \mathcal{I}(g_{|U_\textrm{Reg}})_{/\spn(F_K)} $ are isomorphic.  
   
   The set of singular (= not regular) points is obviously 
   invariant w.r.t. Killing vector field. Let us show that the Killing vector field vanishes at singular points.  
   
   Indeed, by Theorem~\ref{main}  
   the universal cover of  $U_{Reg}$ is isometric to a connected domain of $\mathbb{R}^2$ with the metric (2a), (2b), or (2c). We denote the isometry by $\phi$. If $p\in M^n$ approaches  (the lift of)  a singular point, the $x-$coordiate of the  point $\phi(p)$ tends to $\pm \infty$, or to a value  such that 
    the metric  is not defined, i.e., to  the roots of the 
    equation  $e^{x}+\varepsilon_2=0$ for the metric (2b) and of the equation $(cx + x^2 + \varepsilon_2)x=0$ for the metric (2c).
     It is easy to check that 
    in all these cases  the scalar curvature $R$ converges to infinity, or the length of every Killing vector field converges to infinity, or the length of every Killing vector field converges to zero. (The formulae for the 
    scalar curvature are in the previous  section).   Thus, the Killing vector field vanishes  at singular points.

     Then, the set of singular points  consist of a collection of isolated points, so that  
  the set of the regular points has  only     one   connected component $U_\textrm{Reg}$. As we explained above,      $\mathfrak{p}(g)$ and $ \mathcal{I}(g_{|U_\textrm{Reg}})_{/\spn(F_K)} $ are isomorphic implying that $g_{|U_\textrm{Reg}}$ is superintegrable. 
  Since the integrals are preserved by the geodesic flow,  
  the metric $g$ is superintegrable on the whole manifold. Indeed,  for every 
  $I\in \mathcal{I}(g_{|U_\textrm{Reg}})$,  for
   every singular point $p$ and for every geodesic  $\gamma$ such that $\gamma(0)=p$ and   $\dot \gamma(0)\ne 0$,   we put $I(\dot\gamma(0))= I(\dot\gamma(\varepsilon))$, where $1>>|\varepsilon|\ne 0$. The obtained function is evidently smooth, quadratic 
   in velocities and constant on the orbits of the geodesic flow. 
 Corollary~\ref{corlst} is proved.\qed

\end{document}